\begin{document}
\oddsidemargin=18pt \evensidemargin=18pt
\newtheorem{theorem}{Theorem}[section]
\newtheorem{definition}[theorem]{Definition}
\newtheorem{proposition}[theorem]{Proposition}
\newtheorem{lemma}[theorem]{Lemma}
\newtheorem{remark}[theorem]{Remark}
\newtheorem{corollary}[theorem]{Corollary}
\newtheorem{question}{Question}
\newtheorem{fact}{Fact}[section]
\newtheorem{claim}[theorem]{Claim}

\newtheorem{notation}{Notation}[section]

\def\bem{\begin{remark}\upshape}
\def\ebem{\end{remark}}
\def\nota{\begin{notation}\upshape}
\def\enota{\end{notation}}
\def\defn{\begin{definition}\upshape}
\def\edefn{\end{definition}}
\def\thm{\begin{theorem}}
\def\ethm{\end{theorem}}
\def\lmm{\begin{lemma}}
\def\elmm{\end{lemma}}
\def\qed{\hfill$\quad\Box$}
\def\pr{\par\noindent{\em Proof: }}
\def\prcl{\par\noindent{\em Proof of Claim: }}
\def\cor{\begin{corollary}}
\def\ecor{\end{corollary}}
\def\frag{\begin{question}\upshape}
\def\efrag{\end{question}}
\def\prop{\begin{proposition}}
\def\eprop{\end{proposition}}
\def\cl{\par\noindent{\bf Claim: }\em}
\def\ecl{\par\noindent\upshape}
\def\bsp{\begin{example}}
\def\ebsp{\end{example}}\def\lcm{{\mathcal lcm}}
\def\b{{\mathbf{b}}}
\def\G{{ \mathbb G}}
\def\Z{{ \mathbb Z}}
\def\N{{ \mathbb N}}
\def\R{{ \mathbb R}}
\def\Q{{\mathbb Q}}
\def\D{{\mathbb D}}
\def\I{{\mathcal I}}
\def\Ca{{\mathcal C}}
\def\T{{\mathcal T}}
\def\A{{\mathcal A}}
\def\ki{{\bar K}}
\def\L{{\mathcal L}}
\def\M{{\mathcal M}}
\def\O{{\mathcal O}}
\def\P{{\mathcal P}}
\def\U{{\mathcal U}}
\def\u{{\mathcal u}}
\def\Rs{{\mathcal R}}
\def\Fa{{\mathcal F}}
\def\l{{\ell}}
\def\si{{\sigma}}
\def\lb{{\lbrack\vert}}
\def\rb{{\rbrack\vert}}
\def\si{{\sigma}}
\newcommand{\bigdoublewedge}{%
  \mathop{
    \mathchoice{\bigwedge\mkern-15mu\bigwedge}
               {\bigwedge\mkern-12.5mu\bigwedge}
               {\bigwedge\mkern-12.5mu\bigwedge}
               {\bigwedge\mkern-11mu\bigwedge}
    }
}

\newcommand{\bigdoublevee}{%
  \mathop{
    \mathchoice{\bigvee\mkern-15mu\bigvee}
               {\bigvee\mkern-12.5mu\bigvee}
               {\bigvee\mkern-12.5mu\bigvee}
               {\bigvee\mkern-11mu\bigvee}
    }
}


\title[Fractional Parts of Additive Subgroups]{Fractional Parts of Dense Additive Subgroups of Real Numbers}

\author{Luc B\'elair}
\address{\hskip-\parindent
Luc B\'elair\\
LACIM, D\'epartement de math\'ematiques \\
    Universit\'e du Qu\'ebec - UQAM\\
    C.P. 8888 succ. Centre-ville\\
    Montr\'eal (Qu\'ebec) H3C 3P8, 
    Canada.}
\email{belair.luc@uqam.ca}
\author{Fran\c coise Point$^{\dagger}$}
\address{\hskip-\parindent
  Fran\c coise Point\\
  D\'epartement de Math\'ematique\\
  Universit\'e de Mons, Le Pentagone\\
  20, Place du Parc, B-7000 Mons, Belgium.}
\email{point@math.univ-paris-diderot.fr}
\thanks{${\;}^{\dagger}$Research Director at the "Fonds de la Recherche
  Scientifique-FNRS".}

\begin{abstract}
  Given a dense additive subgroup $G$ of $\mathbb R$ containing $\mathbb Z$, we consider its intersection $\mathbb G$ with the interval $[0,1[$ with the induced order and the group structure given by addition modulo $1$. We axiomatize the  theory of $\mathbb G$ and show it is model-complete, using a Feferman-Vaught type argument. We show that any sufficiently saturated model decomposes into a product of a {\it standard} part and two ordered semigroups of infinitely small and infinitely large elements.
\end{abstract}
\maketitle                   

\section{Introduction}
\par  Motivated by the possibility of  improving the efficiency of timed-automata, F. Bouchy, A. Finkel and J. Leroux showed that any definable set in the structure  $(\R,\Z,+,-,0,1,<)$ can be written as a finite union of sums of definable subsets of the form $Z+D$, where $Z$ is a subset definable in $(\Z,+,0,<)$ and where $D$ is definable in $\mathbb D:=([0,1[, +_1,-_1, 0, <)$, the group of decimals with addition modulo $1$ and the induced order (\cite[Theorem 7]{BFL}). 
The first-order theories of   $(\Z,+,0,<)$ and $(\mathbb R, +, 0, <)$ are well known, the first one being essentially Presburger arithmetic and the other being ordered divisible abelian groups, and that of $(\R,\Z,+,-,0,1,<)$ is basically known (see \cite{W}, but also \cite{boigelot-rassart-wolper}, \cite{miller2001}, \cite{dolich-goodrick}). It seems natural to complete this picture by looking more closely at the first order theory of $\mathbb D$. It is interpretable in $(\mathbb R, +, 0, 1, <)$, so in particular decidable. In our preceding paper \cite{belair-point2014}, we gave an axiomatization of $\mathbb D$ and showed it admits quantifier elimination by adding some natural unary definable functions. In fact it  itself admits quantifier elimination (see section 5 below). In this paper, we now generalize that analysis to substructures of $\D$ induced by dense additive subgroups $G$ of $\R$ containing $\Z$, using the work of Robinson and Zakon \cite{robinson-zakon}. One can view these substructures in two ways :  as a quotient $G/\Z$ with operations induced by the structure $(G,\Z,+,-,0,1,<)$ (see for instance \cite{gunaydin},  chapter 8), or simply as  $\G:=(G\cap [0,\;1[, +_1,-_1, 0, <)$. We axiomatize the structures $\G$ and show that their theories are model-complete in a natural language (section 4) and decidable whenever the set of non-zero natural numbers such that $\G$ has no elements of order $n$ is recursively enumerable. We also give a direct proof that the theory of $(\R,\Z,+,-,0,1,<)$ is NIP,  being unaware of \cite{dolich-goodrick} at the time.  
\par Another approach, used by M. Giraudet, G. Leloup and F. Lucas, is through groups endowed with a cyclic order (see \cite{giraudet-leloup-lucas}, \cite{leloup-lucas}). 
Indeed, one can view $(G\cap [0,\;1[, +_1,-_1, 0, <)$ 
as a subgroup of the unit circle $(S^1,\cdot)$ with the induced circular order, namely the ternary relation $R(x,y,z)$ which holds whenever the points $x, y, z$ appear in that order when sending the interval $[0,1[$ to $S^1$ by the function $t\rightarrow e^{2\pi i t}$. An axiomatization of the structure $(S^1,\cdot, R)$ has been given by Lucas together with a quantifier elimination result \cite{lucas_hab}, \cite{lucas_ddg}. The two approaches  can be linked as follows. Let $R_{\mathbb D}$ be the ternary relation on $[0, 1[$ defined by $R_{\mathbb D}(x,y,z)  \leftrightarrow x<y <z \vee y<z<x \vee z< x<y$, and let $R_G$ be its restriction to $G\cap [0,\;1[$. For all $x, y\in [0,1[$, we have $x<y \leftrightarrow R_\mathbb D(0,x,y).$ Let $\mathbb G_c$ denote the structure $(G\cap [0,\;1[, +_1, 0, R_G)$, then $\mathbb G_c$ is a cyclically ordered group. Let $T_c$ be the Leloup-Lucas axiomatization of the first-order theory of $\mathbb G_c$ (see \cite[Definition 3.1, Theorem 4.12]{leloup-lucas}), and let $T$ be the axiomatization of the first-order theory of $\G$ given below (Definition 2.5). By the preceding remarks, the axioms of $T_c$ can be deduced from $T$ and vice versa, and various properties can be transferred between $T$ and $T_c$ in both directions. For example, model-completeness (see \cite[Proposition 4.9]{leloup-lucas}), the fact of not having the independence property (Corollary 4.3), decidability results (Corollary 4.3) etc.
Note that in order to study cyclically ordered groups, one important tool is to view them as quotient of a densely ordered group,  known as the {\it unwound}, by a cyclic group. 
\par Our approach is more elementary and direct, so we thought it useful to present it. 
The fact that we deal with an order instead of a cyclic order makes the approach possibly more intuitive. For instance it is easier to identify what is the largest convex (subsemi)group, or what are the archimedean models. Of course the counterpart is that we have to deal 
with two subsemigroups (the infinitely small elements and infinitely large ones), instead of 
 the group of infinitesimals only, in the cyclic order case.
\par In the case of $\mathbb D$, one of the key feature is that the torsion subgroup ($\Q\cap [0, 1[$) is dense. We decomposed an $\aleph_{1}$-saturated model into a standard part, the interval $[0,1[$,  and two semigroups, the positive infinitesimals and the infinitely large elements. Then, we used a Feferman-Vaught type argument which relied on the fact that  these subsemigroups are the positive (respectively the negative) part of  a divisible totally ordered group. 
\par Here we proceed as follows. We first write down a theory $T$ consisting of first-order properties of $\G$, making the distinction whether the subgroup of torsion elements of $\G$ is finite or not. As before, we decompose an $\aleph_{1}$-saturated model of $T$ into a divisible subgroup $\D_{\G}$ of $[0,  1[$ with the same torsion as $\G$, and a subgroup $H^{00}_{t}$ with the same indices as $\G$ (section 2). We show that $T$ is model-complete (section 4) using again a Feferman-Vaught type argument. The direct sum decomposition enables us to use that the theory of the subgroup $H^{00}$ of elements which are either infinitely small or infinitely large admits quantifier elimination, even though $H^{00}$ does not appear as a direct summand.  Let us note two special features. First, the standard part of an $\aleph_{1}$-saturated model is still the interval $[0,1[$ and so in general  its torsion is different from the torsion of $\G$. Second, the kernel of the standard part map, the subgroup $H^{00}$, does not coincide with the smallest type-$\emptyset$-definable subgroup of bounded index (which is known to exist since $T$ is NIP.) 

\par Set $\mathbb N^\ast=\mathbb N\setminus\{0\}$. We use boldface letters $\bf x$ to denote a tuple. For $H$ an abelian group, we denote by $H_{tor}$ the torsion subgroup of $H$. 
For basic facts on the model theory of abelian groups, see e.g. \cite{Z}.

 \section{Dense subgroups of $\D$}
 
Let $[0,1[ \; = \{r\in\mathbb R : 0\le r<1\}$ and $+_{1}$ denote addition modulo $1$. Then $([0,1[, +_1)$ is an abelian group isomorphic to the quotient of the additive group of real numbers by the additive group of the integers.   We add the order induced by the order on the real numbers and obtain the structure  $\mathbb D:=([0, 1[,+_1,-_1, 0,<)$. It is not an ordered group since the group operation $+_1$ is not necessarily compatible with the order (note that $\mathbb D$ has torsion elements). Recall that $\mathbb D$ is isomorphic to the circle group $(S^{1},\cdot,1)$ endowed with the order induced by the order on the unit interval through the exponential map $t\mapsto e^{2\pi it}.$ 
 The elements of order $n$, $n\geq 2$, are quantifier-free definable in $\D$. For instance, the element $\frac{1}{2}$ is definable by the formula $y\ne 0 \ \& \ 2y=0$ and in the same way, for $n>2$, the elements  $\frac{1}{n}$ are quantifier-free definable : 
$\frac{1}{n}$ is the (unique) $z$ such that $z\ne 0 \;\&\; nz=0 \; \& \; \wedge_{2\le j\le n-1} z< j z$. 

Let $G$ be a dense subgroup of $(\mathbb R, +,0)$. 
Robinson and Zakon  \cite{robinson-zakon} showed that such a structure can be  axiomatized as an ordered abelian group which is {\it regular dense}, namely for each positive integer $n$, $x<y$ implies that there is an element $z$ such that $x<nz<y$, and where for each $n\geq 2$, the indices
$[G:nG]$ (finite or infinite), are specified.

We will consider dense subgroups of $\mathbb D$. Each of them is induced by a dense subgroup $G$ of $(\mathbb R, +)$ containing $\mathbb Z$. Throughout the article we will fix such a $G$ and set the structure $\mathbb G = (G\cap [0,1[, +_1,-_1, 0, <)$, and $n_{\mathbb G}=[\mathbb G : n\mathbb G]$ ($n_{\mathbb G}\in\mathbb N$ or $n_{\mathbb G}=\infty$).

\lmm
For each $n\ge 2$, and for every $x,y\in \mathbb G$, $x<y$ implies that there exists  $z\in \mathbb G$ such that $x<nz<y$. 
Moreover, if $x<y<\frac{1}{n}$, then we have $(\bigwedge_{i=1}^{n-2} \exists z_{i}\;x<n.z_{i}<y\;\&\;\frac{i}{n}<z_{i}<\frac{i+1}{n})\;\&\;$\\$(\exists z_{0} \;(x<n.z_{0}<y\;\&\;z_{0}<\frac{1}{n}))\;\&\;(\exists z_{n-1}\;(x<n.z_{n-1}<y\;\&\;\frac{n-1}{n}<z_{n-1})).$
\elmm
\begin{proof} It follows from the fact that $\G$ is dense.
\end{proof}
\lmm 
For each $n\ge 2$, we have $[\mathbb G : n\mathbb G]\le [G:nG]\le n[\mathbb G : n\mathbb G],$ and if $\frac{1}{n}\in\mathbb G$ then $[\mathbb G : n\mathbb G] =[G:nG]$. In particular, if $\mathbb Q\subseteq G$, we get $[\mathbb G : n\mathbb G]= [G:nG]$.
\elmm
\begin{proof} The first inequality follows from the fact that $\Z\subset G$ and the second one from the fact that $[\Z:n\Z]=n.$
\end{proof}
\medskip
\par  The following observation is well-known.

\lmm  
 We have $\mathbb D_{tor} \subseteq \mathbb Q\cap [0,1[,$ and if $x\in \mathbb D$ has infinite order, then the cyclic subgroup $\mathbb Zx$ generated by $x$ in $\D$ is dense.
  \elmm
 \par We fix an enumeration $(c_n)_{n=0,1,2, \ldots}$ of $\mathbb G_{tor}$ with  $c_0=0$.
\par Let $L$ be the first-order language  $\{ +, -, 0, < \}$, and let $L_\rho=L\cup\{\rho_n: n\in \N^*\}$ be the expansion of $L$ by new constant symbols. The interpretation of the symbols $\rho_{n}$ in $\G$, denoted by $\rho_n^{\G}$,  is as follows.
\par {\bf Case $(I)$} When $\G_{tor}$ is dense, then $\rho_{n}^\G=c_{n}$, $n\in \N^*$. 
\par {\bf Case $(II)$} When $\mathbb G_{tor}$ is not dense in $\mathbb G$, we choose an element $\delta$ of infinite order 
and we set $\rho_{n}^\G=
n\delta$ if $n$ is odd, and $\rho_{n}^\G=-n\delta$ if $n$ is even. By the above observation $\Z\delta$ is dense in $\G$. 
\par In both cases, we will denote by $\rho_{\frac{i}{n}}$, $1\leq i\leq n-1$, the constant $\rho_{m}$ with $m$ minimal such that the interpretation $\rho_{m}^{\G}$ of $\rho_{m}$ in $\G$ belongs to the interval $]\frac{i-1}{n} , \frac{i}{n}].$
\medskip
\nota \label{N_G} Define $N_{\G}$ 
 as the set of $n\in \N^*$ such that $\G$ has no element of order $n$. Note that if $n\in N_{\G}$ then multiplication by $n$ in $\G$ is injective, and that $N_{\G}$ is a multiplicative subset of $\N$. So, if $N_{\G}\neq \emptyset$, then $N_{\G}$ is infinite and we have the subgroup $D_{N_{\G}}:=\{\frac{i}{n}:\;0\leq i\leq n-1,\;n\in N_{\G}\}$,   and we  denote by $\mathbb D_{N_{\G}}$ the corresponding  substructure of $\D$. 
 This subgroup of $\D$ is divisible (and so pure in $\D$), so it has a direct summand which we fix and denote by $\mathbb D_{\G}$ \cite[Theorem 13.3.1]{Hall}. Note that $\mathbb D_{\G}$  has the same torsion as $\G$.
\enota

 \defn\label{T}   
 
\par Let $T_{0}$ be the $L_\rho$-theory consisting of the following 
axioms $(1)-(5)$ and $(7)-(10)$. Let $T_{I}$ (respectively $T_{II}$) be the theory $T_{0}$ together with axiom $(6)_{I}$ (respectively axiom $(6)_{II}$). We will denote by $T$ the theory $T_{I}$ (respectively $T_{II}$) in case $\G_{tor}$ is dense (respectively in case $\G_{tor}$ is finite).
\begin{enumerate}
\item[(1)] the axioms of abelian groups;
\item[(2)] for each $n\in \N^*$,  the index of the subgroup of elements of the form $nx$ is equal to $n_\G$. 
\item[(3)] for each $n\in \N^*\setminus N_{\G}$, the axiom $$\exists\;z\; ((z\ne 0 \; \& \;nz=0 \; \& \; \wedge_{2\le j\le n-1} z< j z) \; \& \; \forall x(nx=0 \rightarrow (x=0 \vee \bigvee_{1\le j \le n-1} x=jz))$$ and for each $n\in  N_{\G}$, the axiom $\forall x\forall y (nx=ny \; \rightarrow \; x=y)$;
\item[(4)]  the relation $<$ is a strict order with minimum $0$ and without a maximum;
\item[(5)] $\forall x\forall y (x<y \rightarrow -y<-x)$; 
\item[$(6)_{I}$] in case $\G_{tor}$ is dense the axiom $$\exists z (\rho_n=iz \; \& \;  z\ne 0 \; \& \; mz=0 \; \& \; \wedge_{2\le j\le m-1} z< j z)$$ if $c_n=\frac{i}{m}$, $1\leq i\leq m$, $m,\;n\in \N^*$; 
\item[$(6)_{II}$] in case $\G_{tor}$ is finite, the axioms $n\rho_1\ne 0, n\in\N^*,$ and $\rho_n=n\rho_1$, if $n$ is odd, and $\rho_n=-n\rho_1$, if $n$ is even, and the axioms $$\exists z (z\ne 0 \; \& \;  mz=0 \; \& \; (\wedge_{1\le j\le m-1} z< j z) \, \& \, \rho_{n_1} < iz < \rho_{n_2})$$ if $c_n=\frac{i}{m}$ and $\rho_{n_1}^{\G}< c_n< \rho_{n_2}^{\G}$, $m\in \N^*$;
\item[(7)] for each $n\ge 2$, the axiom $x<y \rightarrow \exists z (x<nz<y)$ 
and the axiom 
\begin{eqnarray*}
 a<b<\rho_{\frac{1}{n}}&\rightarrow& \bigwedge_{i=1}^{n-2} \exists z_{i}\;\left(a<n.z_{i}<b\;\&\;\rho_{\frac{i}{n}}<z_{i}<\rho_{\frac{i+1}{n}}\right)\\ 
 && \;\&\;\exists z_{0} \;\left(a<n.z_{0}<b\;\&\;z_{0}<\rho_{\frac{1}{n}}\right)\\ 
 & & \;\&\; \exists z_{n-1}\;\left(a<n.z_{n-1}<b\;\&\;\rho_{\frac{n-1}{n}}<z_{n-1}\right);
 \end{eqnarray*} 

\item[(8)]  for all $n,m\in\mathbb N$, the axiom $\rho_n<\rho_m$, if $\rho_n^{\G}<\rho_m^{\G}$;
\item[(9)] for all natural numbers $m, n, m\prime, n\prime$ such that $\rho_{n\prime}^{\G}+\rho_{m\prime}^{\G}\le\rho_{n}^{\G}+\rho_{m}^{\G}<1$ or $1<\rho_{n\prime}^{\G}+\rho_{m\prime}^{\G}\le\rho_{n}^{\G}+\rho_{m}^{\G}$, (equivalently, $\rho_{n\prime}^{\G}+\rho_{m\prime}^{\G}\le\rho_{n}^{\G}+\rho_{m}^{\G}$ and $\rho_{n}^{\G}<-\rho_{m}^{\G}$ and $\rho_{n\prime}^{\G}+\rho_{m\prime}^{\G}\le\rho_{n}^{\G}+\rho_{m}^{\G}$
and $\rho_{n\prime}^{\G}>-\rho_{m\prime}^{\G}$)

the following axiom :
$$ \forall x\forall y
\left((\rho_{n\prime}\le x\le \rho_{n} \, \& \, \rho_{m\prime}\le y\le \rho_{m}) \rightarrow  \rho_{n\prime} + \rho_{m\prime} \le x+y\le \rho_{n} + \rho_{m}\right);$$ 
\item[(9)'] for all natural numbers $m, n, m\prime, n\prime$ such that $\rho_{n\prime}^{\G}=-\rho_{m\prime}^{\G}$ and $\rho_{n}^{\G}>-\rho_{m}^{\G}$, the following axiom :
$$ \forall x\forall y
\left((\rho_{n\prime}\le x\le \rho_{n} \, \& \, \rho_{m\prime}\le y\le \rho_{m}) \rightarrow x+y\le \rho_{n} + \rho_{m}\right);$$
\item[(10)] for each $n\ge 1$ such that $\rho_n^{\G}<\frac{1}{2}$, the axiom :
$$ \forall x\forall y\forall z \left[ (x\le \rho_{n} \ \& \ y\le\rho_{n} \ \& \ z\le \rho_{n} ) \rightarrow \left( (x\le y \leftrightarrow x+z\le y+z) \ \& \ (x\le x+y) \right)\right].$$
\end{enumerate}
\edefn

It is straightforward to verify that $\mathbb G$ is a model of $T$. It follows from axiom scheme (3) that all models have the same torsion as $\mathbb G$, and when there is $n$-torsion there are exactly $n$ elements of $n$-torsion. We will denote by $\frac{1}{n}$ the smallest such given by axiom 3, and accordingly $\frac{i}{n}$ will denote $i\frac{1}{n}$ in all models.
\lmm \label{consequencesT}
The following properties are consequences of  $T$.
\begin{enumerate}
\item[(a)] We have $(x<-x \, \& \, -y<y) \rightarrow x<y$.
\item[(b)] There exists $n$ such that $(x<y<\rho_n\rightarrow 2x<2y)$.
\item[(c)] We have $\rho_0=0$ and the set $\{\rho_n : n\in \mathbb N\}$ forms a subgroup.
\item[(d)] For each $n>2$, $(x<y<\rho_{1/n} \rightarrow nx<ny)$.
\item[(e)] For each $n\ge 2$, $(x<y<\rho_{1/n} \rightarrow \exists z_{0} \;(x<n.z_{0}<y\;\&\;z_{0}<y))$.
\end{enumerate}
\elmm
\begin{proof} Item (a) : assume that $x<-x, -y<y$ and $y\le x$; so $-x\le -y$ by axiom (4), which implies that $x<-x\le -y<y$, a contradiction; therefore $(x<-x \, \& \, -y<y) \rightarrow x<y$. 
\par Item (b) follows from axiom (10). 
Item (c) follows from  axioms (6)$_I$, (6)$_{II}$. 
 Item (d) follows from axioms (9), (10). 
 Item (e) follows from axioms (7), (9), (10). 
\end{proof}

\defn  Let $H$ be a model of $T$, we set 
$H^{00}_{+}= \{x\in H: x<\rho_{n}, \forall n\in\mathbb N^\ast \}$,  $H^{00}_{-}=\{x\in H: -x<\rho_{n}, \forall n\in\mathbb N^\ast\}$ and $H^{00}=H^{00}_{+}\cup H^{00}_{-}$. Note that both $H^{00}_{+},\;H^{00}_{-}$ are $\bigdoublewedge$-$\emptyset$-definable, i.e. a countable intersection of definable subsets of $H$ without parameters.
\edefn
 In section 4, we will show that $T$ is NIP. So, we know that $H$ has a smallest type-definable subgroup of bounded index \cite{Sh}. A natural candidate is the subgroup $K:=\bigcap_{\{n\in\mathbb N: n_{\G} \in \N\}} n H^{00}$, which is, in general, a proper type-definable subgroup of $H^{00}$ of bounded index. 
 We did choose that notation because in case $\G=\D$, $K=H^{00}$ and it is well-known that $H^{00}$ is the smallest type definable of bounded index.

\lmm \label{Tmo} 
Let $H$ be a model of  $T$, and $a,b\in H$ such that $a,b\in H^{00}_+$ and  $a<b$, then $b-a \in H^{00}_+$.  
\elmm 
\begin{proof}Set $c=b-a$ and assume that $c\not\in H^{00}_+$.  Then for some $n\ge 1$ we have $\rho_n\le c$. 
If $c\not\in H^{00}_{-}$, then there exists $n_1\ge 1$ such that $\rho_{n_1}\le -c$, therefore $c\le -\rho_{n_1}=\rho_{n_2}$ (Lemma \ref{consequencesT}). By axiom (9), we obtain $\rho_n\le a+c=b$, a contradiction. 
If $c\in H^{00}_{-},$ then $-c\in H^{00}_{+}$ and then by axiom (10), we get $b\le b-c=a$, a contradiction. \end{proof}

\lmm 
Let $H$ be a model of $T$. Then $H^{00}$ is a torsion-free subgroup of $H$. 
 \elmm
 \begin{proof} By definition we have  $0\in H^{00}_+, x\in H^{00}_+ \rightarrow -x\in H^{00}_-, x\in H^{00}_- \rightarrow -x\in H^{00}_+$. The set $H^{00}_+$ is stable by $+$ (axiom (9)), and so is $H^{00}_-$ using the function $-$. Let $x\in H^{00}_+, y\in H^{00}_-$, non-zero, and consider $x+y$. If $x=-y$, then $x+y=0$. If $x>-y$, then $x-(-y)=x+y\in H^{00}_+$ by Lemma \ref{Tmo}. If $x<-y$, then $-y-x\in H^{00}_+$ by Lemma \ref{Tmo}, so $x+y\in H^{00}_-$. Thus $H^{00}$ is  a subgroup. 
\par In case (I), $H^{00}$ is torsion-free since $\{\rho_n:\;n\in \N^*\}$ is the set of the torsion elements. In case (II), if follows from axioms $(7)$ and $(9)$.
 \end{proof}

\par With the induced order $H^{00}$ is not an abelian ordered group since $0$ is a minimum, unless it is the trivial group. However  we will check that $H^{00}_+$ is the positive part of a unique abelian totally ordered group ($H^{00}_-\setminus\{0\}$ its negative part). This follows from well-known results \cite{C}, which we will recall in the next section.

\begin{definition} Let $H$ be an $\aleph_{1}$-saturated model of $T$ and let 
  $\frac{i}{\ell}\in \D_{tor}$ with $\ell\in N_{\G}$, $1\le i<\ell$. We define $(\frac{i}{\ell}):=\{h\in H: H\models \rho_{n}<h<\rho_m, \mbox{ whenever }\D\models \rho_n<\frac{i}{\ell}<\rho_m, m,n\in\mathbb N^\ast\}$
  and call it the {\it cut} in $H$ determined by $\frac{i}{\ell}$. \end{definition} 

 \par Note that one can add two such cuts in $H$, namely $(\frac{i}{n})+(\frac{j}{m})$ and get the cut $(\frac{im+jn}{n.m})$, with $n,\;m\in N_{\G}$, $1\leq i<n,\,1\leq j<m$. Since $N_{\G}$ is a multiplicative set, this will follow from the next lemma.

\lmm\label{coupures} Let $H$ be an $\aleph_{1}$-saturated model of $T$, $n\in N_{\G}$ and let $z\in H$ such that $z\in (\frac{1}{n})$. Then $nz\in H^{00}$ and $-z\in (\frac{n-1}{n})$. Moreover if $nz\in H^{00}_{-}$, then  $-nz\in H^{00}_{+}$.
\elmm
\begin{proof} Let $z\in (\frac{1}{z})$. Note that $-\frac{1}{n}=\frac{n-1}{n}$, and $z\ne \rho_m$, for all $m$. First we check that $-z\in (\frac{n-1}{n})$. Suppose $\rho_{k_1}<\frac{n-1}{n}<\rho_{k_2}$, we have to see that $\rho_{k_1}<-z<\rho_{k_2}$. We have $\rho_{k_1}<-\frac{1}{n}<\rho_{k_2}$. By Axiom 5, we get $-\rho_{k_2}<\frac{1}{n}<-\rho_{k_1}$. Hence $\rho'_{k_2}<\frac{1}{n}<\rho'_{k_1}$, where $\rho'_{k_1}=-\rho_{k_1}, \rho'_{k_2}=-\rho_{k_2}$ (lemma 2.6(c)). Since $z\in(\frac{1}{n})$, we get $\rho'_{k_2}<z<\rho'_{k_1}$. Using Axiom 5 again we get $\rho_{k_1}<-z<\rho_{k_2}$, as wanted. Now we check that $nz\in H^{00}$. Suppose not, then we get some inequality of the form $\rho_{m_1}<nz<\rho_{m_2}$. Since the subgroup generated by $n\rho_{1}^\G$ is also dense in $[0, 1[$, we may assume that $\rho_{m_1}=n.\rho_{k_1}, \rho_{m_2}=n.\rho_{k_2}$, where $\rho_{k_1}, \rho_{k_2}<\frac{1}{n}.$ We get $n\rho_{k_1}<nz<n\rho_{k_2}$. Now we have $z\ne \rho_{k_1}, \rho_{k_2}$, so by Axiom 9 we must have $ \rho_{k_1}<z< \rho_{k_2}$, which contradicts $z\in(\frac{1}{n})$. Finally, the last assertion of the lemma follows directly from Axiom (5).
\end{proof}

 \par Let $H$ be a model of $T$, we set $H_\rho=\{\rho_n^H : n\in \mathbb N\}$. In the following definition, dually to Definition 2.10, we define the cuts in $H_{\rho}$ determined by the elements of $H$,
 and we define the standard part map using the density of $\{\rho_n^{\G}: n\in \mathbb N\}$ in $\D$.
 
\defn\label{standard}  Let $H$ be a model of $T$. 
Any  $g\in H$ determines a {\it cut} in $H_\rho$, namely the following pair of subsets of $H_{\rho}$:  
$C^-(g):=\{h\in H_{\rho}:\;h<g\}$,  $C^+(g):=\{h\in H_{\rho}:\;g<h\}.$ We note that $C^+(g)=\emptyset \leftrightarrow g\in H^{00}_{-}$ and $C^-(g)=\emptyset \leftrightarrow g\in H^{00}_{+}$. We define the {\it standard part map} $st : H\to \mathbb D$ as follows.
We identify $\rho_n$ with the corresponding real number, and we set $st(\rho^H_n)=\rho_n^{\G}$, and for $g\not\in H_{\rho}$, we let $st(g)$ be the real number $r$ such that  $C^-(g)\le r\le C^+(g)$ if $C^-(g), C^+(g)\ne \emptyset$, and $st(g)=0$ otherwise. 
\edefn
\lmm \label{partie_standard} Let $H$ be an $\aleph_{1}$-saturated model of $T$. Then the map $st : H\to \mathbb D$ is a morphism of abelian groups and its kernel is equal to $H^{00}$. If $H$ is $\aleph_{1}$-saturated, then $H/H^{00}\cong \D$.
 \elmm
 \begin{proof}It is straightforward that $st(-x)=-st(x)$ and $st(x)=0 \leftrightarrow x\in H^{00}$. Let us show that $st(x+y)=st(x)+st(y)$. \par If $x, y\in H^{00}$, then $x+y\in H^{00}$ and so we are done. 
\par If $x, y\not\in H^{00}$, then we have $C^-(x), C^+(x), C^-(y), C^+(y)\ne \emptyset$ and we add the cuts as in $\mathbb R$ using axiom (9)  except when
$x+y=0$. In the case where $x\in (\frac{i}{n})$ and $y\in (\frac{n-i}{n})$ with $1\leq i\leq n-1$, we use Lemma \ref{coupures}. In the other cases, we use the fact that $\Q$ is dense and so for any $\rho_n\in C^{\pm}(x)\cap (q_1)$, $\rho_m\in C^{\pm}(y)\cap (q_2)$ we have that $\rho_n+\rho_m\in (q_1+q_2)$.
\par If $x\in H^{00}_+$ and $y\not\in H^{00}$, then by axiom (9) we have $C^+(x+y)=C^+(y)$ and $C^-(x+y)=C^-(y)$, so $st(x+y)=st(y)$. 
\par If $x\in H^{00}_-$ and $y\not\in H^{00}$, we reduce ourselves to the preceding case using the function $-$. 
\par If $H$ is $\aleph_{1}$-saturated, then all the cuts are realized. \end{proof}

\section{Monoids and regular groups}
\par We want to axiomatize the theory of $H^{00}$. We introduce the following schemes of axioms. 
\defn\label{ax-reg-ind} 
Recall that for $n\ge 2$, $n_{\G}=[\mathbb G:n\mathbb G].$ Consider the three axiom schemes:
\begin{enumerate}
\item[(1)] (regularity) For each $n\ge 2$, the axiom : $x<y \rightarrow \exists z (x<nz<y)$.
\item[(2)] (indices) For each $n\ge 2$, the axiom :
$$ \exists x_1,\ldots, x_{\tilde n_{\mathbb G}} \left( \bigwedge_i \neg \exists y (x_i=ny) \; \& \;  \bigwedge_{i<j} \neg \exists z_1,z_2 (x_i+nz_1=x_j+nz_2) \; \& \; \forall x\exists z \bigvee_i x=x_i+nz\right)$$
where $\tilde n_\mathbb G=n_{\G}$ if $n\notin N_{\G}$, and $\tilde n_\mathbb G=n_{\G}+n-1$ otherwise, with the convention that if $n_{\mathbb G}=\infty,$ then we have the corresponding infinite list of axioms.
\item[(3)]
 (indices in $\G$) For each $n\ge 2$, the axiom : 
$$ \exists x_1,\ldots, x_{n_{\mathbb G}} \left( \bigwedge_i \neg \exists y (x_i=ny) \; \& \;  \bigwedge_{i<j} \neg \exists z_1,z_2 (x_i+nz_1=x_j+nz_2) \; \& \; \forall x\exists z \bigvee_i x=x_i+nz\right).$$
\end{enumerate}
\edefn

\defn Let $L_{mo}$ be the language $\{+, 0, <\}$ 
and $T_{mo}$ be the $L_{mo}$-theory consisting of the following axioms :  
the axioms for commutative monoids, the relation $<$ is a total order, $\forall a\forall b\forall c \; (a+c\leq b+c\leftrightarrow a\leq b)$. Let $T_{mon}$ be $T_{mo}$ together with the following axioms : $\forall a\forall b (a\le a+b)$, $\forall a\forall b\; ( a<b\rightarrow \exists c(a+c=b))$. 
\edefn

\defn\label{ind} 
 Let $T_{mr00}$ be the $L_{mo}$-theory extending 
$T_{mon}$ by adding the schemes of axioms $(1),\;(2)$ above (Definition \ref{ax-reg-ind}).
 Let $T_{mr\mathbb G}$ be the $L_{mo}$-theory 
extending $T_{mon}$ by adding the schemes of axioms $(1),\;(3)$ above (Definition \ref{ax-reg-ind}).
\edefn

\lmm  \label{TmrG}
Let $H\models T$, then $H^{00}_+\models T_{mr00}$. 
\elmm
\begin{proof} By Lemma \ref{Tmo}, $H^{00}_+$ is a model of $T_{mon}$. Since $H$ is a model of $T$,  $H^{00}_{+}$ 
satisfies axiom $(1)$ by Lemma \ref{consequencesT}.  Assume that $H$ is $\aleph_1$-saturated. The standard model $\mathbb G$ implies the consistency of the following type $tp(x_1,\ldots,x_{n_{\G}})$ :
$$ \left\{   \neg \exists y (x_i=ny) ,  \neg \exists z_1\exists z_2 (x_i+nz_1=x_j+nz_2) , x_i< \rho_m : i, j=1, \ldots, n_{\G}, i\ne j, m\in \mathbb N\right\}$$
So it is realised in $H$, by, say, $h_1, \ldots, h_{n_{\G}}$. We have $h_1, \ldots, h_{n_{\G}}\in H^{00}_+$ and they belong to different cosets modulo $nH$ and so $[H^{00}:nH^{00}]\ge n_{\G}$ and  $[H^{00}_+:nH^{00}_+]\ge n_{\G}$.
\par Let $n\in \N\setminus(N_{\G}\cup\{0\}$), so we have $\frac{1}{n}\in H$. Suppose that $n.y\in H^{00}$ and so for some $i$, $y\in(\frac{i}{n})$, $1\leq i\leq n-1$. We get $y-\frac{i}{n}\in H^{00}$ and so $[H^{00}:nH^{00}]= n_{\G}$ and $[H^{00}_+:nH^{00}_+]= n_{\G}$.
\par Let $n\in N_{\G}$. Since $H$ is $\aleph_{1}$-saturated, there is at least one element $z_{i}$ in the cut $(\frac{i}{n})$, $1\leq i\leq n-1$ and $nz_{i}\in H^{00}_{+}\setminus \{0\}$ (Lemma \ref{coupures}). Note that this element is not in the image of another element of $H$ under the map $x\mapsto nx$ (this would create $n$-torsion). In particular $nz_{i}$ belongs to a new coset of $nH^{00}$. The number of new cosets is equal to $n-1$ and since we specify the torsion in the standard model (axiom (3)), the number of these new cosets is the same in every model. We obtain $[H^{00}:nH^{00}]=(n-1)+n_{\G}$, whence $[H^{00}_+:nH^{00}_+]=(n-1)+n_{\G}$. \end{proof}

\defn
Let $H$ be an $\aleph_1$-saturated model of $T$. For each $n\in N_{\G}$ and $1\leq i\leq n-1$, we denote by $(\frac{i}{n})^+$ (respectively $(\frac{i}{n})^-$) the set of all elements $z$ 
 which are in the cut of $\frac{i}{n}$ and such that $n.z\in H^{00}_{+}$ (respectively $n.z_{i}\in H^{00}_{-}$). 
 We consider the monoid generated by $H^{00}_{+}$ (respectively $H^{00}_{-}$) and all the $(\frac{i}{n})^+$ (respectively $(\frac{i}{n})^-$).
 We denote this monoid  by $H^{00}_{+,t}$ (respectively $H^{00}_{-,t}$) and we set $H^{00}_{t}:=H^{00}_{+,t}\cup H^{00}_{-,t}$.
 \edefn
 \par Note that $H^{00}_{+,t}$=$H^{00}_{+}\cup\{(\frac{i}{n})^+;\;n\in N_{\G},\;1\leq i\leq n-1\}.$

\nota\label{cosets}
 We denote by $u_{1},\cdots,u_{n_{\G}}$ the cosets representatives of $nH_{+}^{00}$ inside $H_{+}^{00}$ which do not belong to $nH\setminus nH_{+}^{00}$ (note that $n_{\G}$ may be infinite and in this case the enumeration of the coset representatives is infinite) and by $v_{i,n}$, $1\leq i\leq n-1$, the coset representatives which are in the image of the cut $(\frac{i}{n})^+$ by the map $x\rightarrow nx$. We will make the convention that $u_{1}\in nH_{+}^{00}$ and for convenience we will also denote that element by $v_{0,n}$.
\enota
\lmm Let $H$ be an $\aleph_1$-saturated model of $T$. Then for every $n\in \N^\ast$, the indices of 
$nH^{00}_{+,t}$ in $H^{00}_{+,t}$ are the same as those for $\G$. 
Moreover $H^{00}_{+,t}$ satisfies the regularity axiom scheme and is a model of $T_{mr\mathbb G}.$
\elmm
\par
\begin{proof}We first show that $H^{00}_{+,t}$ is pure in $H$, which implies for $n\in \N^\ast$ that $[H^{00}_{+,t}:nH^{00}_{+,t}]= n_{\G}.$
 \par First assume that $ny\in H^{00}_{+}$ and $y\notin H^{00}_{+}$.   Either $n\notin N_{\G}$, so $y$ is of the form $\frac{i}{n}+z$, for some $z\in H^{00}_{+}$ and $1\leq i\leq n$. Therefore we found an element in $H^{00}_{+}$ such that $ny=nz$. Or $n\in N_{\G}$, so $y\in (\frac{i}{n})^+$, for some $1\leq i\leq n$, and so $y\in H^{00}_{+,t}$.
 \par Second, assume that $ny\in (\frac{j}{m})^+$ with $m\in N_{\G}$, $1\leq j<m$, then $y$ is in a cut of the  form $\frac{k}{nm}$,  $nm\in N_{\G}$, $1\leq k<nm$. So again $y\in H^{00}_{+,t}$.

\par Now let us show that  $H^{00}_{+,t}$ is regular. Let $x<y$ in $H^{00}_{+,t}$. We distinguish the following cases.
\par If $x, y\in H^{00}_{+}$, then we apply axiom (7) of Definition \ref{T} and we find an element $z_{0}\in H^{00}_{+}$ such that \mbox{$x<n.z_{0}<y$}.
\par If $x, y\in (\frac{i}{m})^+$, for $m\in N_{\G}$ and $1\leq i<m$, we apply the regularity of $H$ (axiom (7) of Definition \ref{T}) and we find an element $h\in H$ such that $x<nh<y$. But $N_{\G}$ is a multiplicative set, so $h\in(\frac{j}{nm})^+$ which belongs to $H^{00}_{+,t}$, $1\leq j<nm$.
\par Finally, if $x,\;y$ belong to different cuts, we use the fact that the quotient $H^{00}_{t}/H^{00}\cong \D_{N_{\G}}$ is dense in $\D$.   
\end{proof}

 \lmm Let $H$ be an $\aleph_1$-saturated model of $T$. Then $H^{00}_{+,t}$ is also $\aleph_{1}$-saturated as a pure monoid.
 \elmm 
 \begin{proof}We have to show that any system of positive primitive formulas with one free variable and with parameters in $H^{00}_{+,t}$  which is finitely satisfiable is satisfiable. It is straightforward  that $H^{00}$ (respectively $H^{00}_{+}$) is $\aleph_1$-saturated and so if this system is finitely satisfiable by an element of $H_{+}^{00}$, it is immediate. Otherwise it is finitely satisfiable by an element of  $z_{i,n}+H^{00}_{+}$ for some $z_{i,n}\in (\frac{i}{n})^+$, $n\in N_{\G}$. Since all the cosets $z_{i,n}+H^{00}$ are disjoint, we may assume we stay in the same coset. We may then apply the $\aleph_{1}$-saturation of $H$. \end{proof} 
\bigskip

\par We decompose the problem in two parts: a divisible subgroup of $([0,1[,+,0)$ with the same torsion as $\G$ on one hand, and on the other hand the subgroup $H^{00}_{t}$ which has the same indices as $\G$ but which is torsion-free. To a nonzero element $u$ of $H^{00}_{t}$, we associate a couple $(u_{+},\frac{i}{n})$, $u_{+}\in H_{+}^{00}\setminus \{0\}$, if $u\in (\frac{i}{n})^+$, $1\leq i\leq n-1$, or $(u,0)$ if $u\in H_{+}^{00}$, or  $(u_{-},\frac{i}{n})$, $u_{-}\in H_{-}^{00}\setminus\{0\}$, if $u\in (\frac{i}{n})^-$, $1\leq i\leq n-1$, or $(u,0)$ if $u\in H_{-}^{00}$. The set of these couples is endowed with the lexicographic order.
\par We have  $H^{00}_{t}/H^{00}\cong \D_{N_{\G}}$. 
The first step consists in studying the group $H^{00}=H^{00}_+\cup H^{00}_-$. 
\medskip
\par Let $S_1,\;S_2$ be two commutative monoids with $S_1\subseteq S_2$. We say that $S_1$ is pure in $S_2$ if for all $a\in S_1$ whenever there exists $b\in S_2$ such that $n.b=a$, then there exists $c\in S_1$ such that $n.c=a$, $n\in \N$. 
\lmm \label{modelecompleteTmrG}
 Let $S_1, S_2$ be two models of $T_{mr00}$ such that $S_1\subseteq S_2$ and assume that $S_1$ is a pure submonoid of $S_2$. Then $S_1$ is an elementary substructure of $S_2$.
\elmm
 \begin{proof}Let $S_1, S_2$ be two models of $T_{mr00}$ such that $S_1\subseteq S_2$. By \cite{C}, let $G_1, G_2$ be ordered abelian groups such that $S_k$ is the positive part of $G_k, k=1,2$. 
 The inclusion $S_1\subseteq S_2$ induces an inclusion of $G_1$ in $G_2$, representing $x\in G_1$ as $x=s-s', s, s'\in S_1$. We get that $G_1$ is a substructure of $G_2$. We claim that $G_k, k=1,2,$ is regular dense and $[G_k:nG_k]=n_{\mathbb G}$, for each $n\ge 2$. Then it follows by results of  Robinson-Zakon (\cite{robinson-zakon}) that $G_1\subseteq_{ec} G_2$, whenever $G_1$ is a pure subgroup of $G_2$, thus  $S_1\subseteq_{ec} S_2$, as wanted. We check the properties of $G_k$. Let $x,y\in G_k$ such that $x<y$ and $n\in \N$, $n\ge 2$. If $x,y\in S_k$, then there exists $z\in S_k$ such that $x<nz<y$, as wanted. If $-x, -y\in S_k$, then  we have $-y<-x$ and there exists $z\in S_k$  such that $-y<nz<-x$, so $x< n(-z) <y$, as wanted. If $-x, y\in S_k, y\ne 0$, then $0<y$ and there exists $z\in S_k$ such that $0<nz<y$, so $x<nz<y$, as wanted. This shows that $G_k$ is regular dense. Let $n\ge 2$, we now check that $[G_k:nG_k]=\tilde n_{\mathbb G}$. Let $x_1, \ldots, x_{n_{\mathbb G}}\in S_k$ as in axiom (2) of Definition \ref{ax-reg-ind}. Let $x\in G_k$. If $x\in S_k$, then $x\in x_i+nS_k\subseteq x_i+nG_k$ for some $i$. If  $ -x\in S_k,$ then  $-x\in x_i+nS_k$ for some $i$, say $-x=x_i+ny, y\in S_k$, so  $x=-x_i+n(-y)=(n-1)x_i+nx_i+n(-y)=x_j+nz+n(x-y)=x_j+n(z+x-y)$, for some $j$ and $z\in S_k$. This shows that $[G_k:nG_k]\le n_{\mathbb G}$. We claim that $x_i-x_j\not\in nG_k, i\ne j$, and this implies $[G_k:nG_k]\ge n_{\mathbb G}$. Indeed, if we had $x_i-x_j\in nG_k, i\ne j$, say $x_i-x_j=nx, x\in G_k$, we would get $ z_1, z_2\in S_k$ such that $x=z_2-z_1$, and $x_i+nz_1=x_j+nz_2$ which contradicts axiom (2) of Definition \ref{ax-reg-ind}. \end{proof}

\nota Let $L_{mo}'$ be the language $L_{mo}$ expanded, for each $n\ge 2$, by the binary predicate $D_n(x,y)$ defined by $D_n(x,y)\leftrightarrow \exists z_1\exists z_2 (x+nz_1=y+nz_2)$.
\enota
 \lmm \label{eq-mrG}
 The theory $T_{mr00}$ admits quantifier elimination in the language $L_{mo}'$.
 \elmm
 \begin{proof} We apply the following  criterion for quantifier elimination (see for instance \cite{marker}, 3.1.6) :
let $S_1,S_2$ be models of $T_{mr\mathbb G}$ and  $A$ such that $A\subseteq S_1, A\subseteq S_2$ as an $L_{mo}'$-substructure, let $\phi({\bf y}, x)$ be a quantifier-free $L_{mo}'$-formula  and ${\bf a}\in A$, and  assume there exists $b\in S_1$ such that $S_1\models \phi({\bf a}, b)$, then there  exists $c\in S_2$ such that $S_2\models \phi({\bf a}, c)$. Indeed, by \cite{C}, let $G_i$ be the abelian totally ordered group such that $S_i$ is the positive part of $G_i, i=1,2$. Let $G_A$ be the subgroup generated by $A$ such that $G_A\subseteq G_1, G_A\subseteq G_2$.  Then $G_i$ is a regular dense abelian totally ordered group. Let us check that $G_A$ is a common $L_{mo}'$-substructure of $G_1, G_2$. We need to check that given $x, y\in G_A$, $G_1\models D_n(x,y)  \leftrightarrow G_2\models D_n(x,y)$. Write $x=d-e, y=f-g$, with $d,e,f,g\in A$. Assume that, say, $G_1\models D_n(x,y)$. Then there exist $z_1, z_2\in G_1$ such that $x+nz_1=y+nz_2$. We may suppose that $z_1, z_2\in S_1$. We obtain $d-e+nz_1=f-g+nz_2$, so $d+g+nz_1=f+e+nz_2$, and then  $S_1\models D_n(d+g, f+e)$. So $S_2\models D_n(d+g, f+e)$, since $A$ is a common $L_{mo}'$-substructure of $S_1, S_2$. We obtain $G_2\models D_n(x,y)$. 
\par We are now in the position of applying the quantifier elimination result of V. Weispfenning for ordered regular dense abelian groups in the language of ordered abelian groups with the predicates  $D_n,$ $n\ge 2$ (\cite{weispfenning1981}). We obtain $G_1\models \exists u(u\ge 0 \& \phi({\bf a}, u))$, which implies that $G_2\models \exists u (u\ge 0 \& \phi({\bf a}, u))$, and any such element $u$ gives the sought after element $c$. \end{proof}
 
 \defn
 Let $T_{r\mathbb G, <}$ be the $L$-theory of torsion-free abelian groups together with the following axioms : the relation $<$ is a strict total order; the function $x\mapsto -x$ swaps the order; the subset $\{ x : x<-x\} \cup \{0\}$ is a  model of $T_{mr00}$; and finally the axiom $(x<-x \; \& \; -y<y )\rightarrow x<y$.
 \edefn

 \lmm  
 Let $H$ be an $\aleph_1$-saturated model of $T$, then $H^{00}$ is a model of $T_{r\mathbb G, <}$.
 \elmm
 \begin{proof} We have already seen that $H^{00}_+ = \{ x : x<-x\} \cup \{0\}$. The result follows from Lemmas \ref{TmrG} and \ref{consequencesT}. \end{proof}

 \lmm  \label{modelecompleteTrg<}
 Let $F_{0}, F_{1}$ be two models of $T_{r\mathbb G,<}$ such that $F_{0}\subseteq F_{1}$ and assume that $F_1$ is a pure subgroup of $F_2$. Then $F_1$ is an elementary substructure of $F_2$.
 \elmm
 \begin{proof} 
For a model $F$ of $T_{r\mathbb G,<}$, let $F_{+}:=\{x\in F: x<-x\}\cup \{0\}$, then $F_{+}$ is a model of $T_{mr00}$. Let $F_{0}, F_{1}$ be two models of $T_{r\mathbb G,<}$ such that $F_{0}\subseteq F_{1}$. Then $F_{0,+}\subseteq F_{1,+}$ and by Lemma \ref{modelecompleteTmrG} we have $F_{0,+}\subseteq_{ec} F_{1,+}$.
By the compactness theorem, there exists 
a model $F_2$ of $T_{r\mathbb G,<}$ such that $F_0\subseteq_{ec} F_2$ and $F_{0,+}\subseteq F_{1,+}\subseteq F_{2,+}$. 
  Let $i:F_{1,+}\to F_{2,+}$ be the inclusion. Note that since the order $<$ is total, we have either $x\in  F_{1,+}$ or $-x\in  F_{1,+}$, but not both if $x\ne 0$. Let $i^\ast : F_1\to  F_2$ defined by $i^*(x)=x$ if $x\in F_{1,+}$ and $i^*(x)=-i(-x)$ if $-x\in F_{1,+}$. 
We check that $i^\ast$ is a morphism and so $F_1$ is a substructure of $F_2$. Indeed, we have $i^\ast(0)=0$, and so $i^\ast(-x)=-i^\ast(x)$. To check that $i^\ast(x+y)=i^\ast(x)+i^\ast(y)$, it suffices to consider the case where $x\in  F_{1,+}$ and $ -y\in  F_{1,+}$. If $x+y\in F_{1,+}$, we have $i^\ast(x)=i^\ast(x+y+(-y))=i(x+y+(-y))=i(x+y)+i(-y)=i^\ast(x+y)+i^\ast(-y)$\\$=i^\ast(x+y)-i^\ast(y)$, therefore $i^\ast(x+y)=i^\ast(x)+i^\ast(y)$. If $-(x+y)\in  F_{1,+}$, then using $-(x+y)=(-x)+(-y)$ we get back to the preceding case. It remains to verify that $x<y$ implies that  $i^\ast(x)<i^\ast(y)$. If $x,y \in F_{1,+}$, it is straightforward. If $-x, -y\in F_{1,+}$, we use the fact that $-$ reverses the order. The only other possibility is that $x, -y\in F_{1,+}$. In this case we obtain $x, -y\in F_{2,+}$, namely $i^\ast(x), -i^\ast(y)\in F_{2,+}$, so $i^\ast(x)<i^\ast(y)$ since $F_{2}\models T_{r\mathbb G,<}.$ We obtain $F_0\subseteq F_1\subseteq F_2$, and since $F_0\subseteq_{ec} F_2$ we get  $F_0\subseteq_{ec} F_1$. \end{proof}
 
  \lmm  \label{eqTrg<}
  Let $L':=L\cup \{D_n(x,y): n\in \N^*\}$. The theory $T_{r\mathbb G, <}$ admits quantifier elimination in $L'$ and is complete.
  \elmm
 \begin{proof} Given a model $F$ of $T_{r\mathbb G, <}$, set $F_{+}:=\{x\in F: x<-x\}\cup \{0\}$. Then $F_{+}\models T_{mr\mathbb G}$. 
 \par We use the same criterion as above to prove quantifier elimination : let $F_1,F_2$ be models of $T_{r\mathbb G, <}$ and $A$ such that  $A\subseteq F_1, A\subseteq F_2$ as an $L'$-substructure, let $\phi({\bf y}, x)$ be a quantifier-free $L'$-formula and let ${\bf a}\in A$., Suppose that there exists $b\in F_1$ such that $F_1\models \phi({\bf a}, b)$, then there exists $c\in F_2$ such that $F_2\models \phi({\bf a}, c)$. Let $A_+ =\{ x\in A : x<-x\} \cup \{ 0\}$. Then $A_+$ is a common $L'$-substructure of $F_{1,+}, F_{2,+}$. By Lemma \ref{eq-mrG}, we have $(F_{1,+}, A_+)\equiv_{L_{mo}'} (F_{2,+}, A_+)$. Since either $z\in F_{k,+}$ or $-z\in F_{k,+},$  we obtain for all $x,y\in F_{k,+},$ that $F_k\models D_n(x,y)$ iff there exist $z_1, z_2\in F_{k,+}$ such that $x+nz_1=y+nz_2.$ So we may consider $F_{k,+}$ as an $L_{mo}'$-substructure of $F_k$. By the compactness theorem, there exists a model $F$ of $T_{r\mathbb G, <}$ such that $F_2\preceq F$ and $F_{1,+}\subseteq F$ as $L_{mo}'$-substructures. Let $i:F_{1,+}\to F$ denote the inclusion map, and let $i^\ast : F_1\to F$ be the map defined by $i^\ast(x)=x$ if $x\in F_{1,+}$, and $i^\ast(x)=-i(-x)$ if $-x\in F_{1,+}.$ As in Lemma \ref{modelecompleteTrg<}, we have $i^\ast(0)=0, i^\ast(-x) =-i^\ast(x), i^\ast(x+y)=i^\ast(x)+i^\ast(y),$ and $i^\ast$ is injective. Moreover $F_1\models D_n(x,y)$ iff $F\models D_n(i^\ast(x),i^\ast(y))$. The forward direction is clear. For  the converse, assume that $F\models D_n(i^\ast(x),i^\ast(y))$. We distinguish the following cases. If $x,y\in F_{1,+}$, then we immediately obtain that $F_{1,+}\models D_n(x,y)$, and so $F_1\models D_n(x,y)$. If $-x, -y\in F_{1,+}$, then $F_1\models D_n(-x,-y)$ by the preceding case, so $F_1\models D_n(x,y)$. If $-x, y\in F_{1,+}$, say $u_1, u_2\in F$ such that $-i(-x)+nu_1=y+nu_2$, then $nu_1=i(-x)+y+nu_2$. Therefore $F\models D_n(i^\ast(0), i^\ast (i(-x)+y))$, and by the first case, $F_1\models D_n(0, -x+y)$, so $F_1\models D_n(x,y)$. Then, $i^\ast$ embeds $F_1$ into $F$ as an $L'$-substructure. We obtain $F\models \exists u (\phi({\bf a}, u)$, so $F_2\models \exists u (\phi({\bf a}, u)$.  
 \par Let us show that $T_{r\mathbb G, <}$ is complete. Let $\si$ be an $L'$-sentence. Since $T_{r\mathbb G,<}$ admits quantifier elimination, we may assume that $\sigma$ is quantifier-free and so it it is a boolean combination of sentences of the form $D_n(0,0)$ or $0=0$. 
  \end{proof}

\section{Model-completeness and completeness}
\par  Let $L_\rho':=L_\rho\cup \{D_n(x,y): n\in \N^*\}$. 
\thm \label{eqL} 
The $L_\rho'$-theory $T$ is model-complete.
\ethm

Recall that we have fixed the following direct sum decomposition: $\D=\D_{N_{\G}}\oplus  \D_{\G}$ (\ref{N_G}), with $\mathbb D_{\G}$  having the same torsion as $\G$.

\lmm \label{proj_ss} Let $M\subseteq N$ be two $\aleph_1$-saturated models of $T$. Then we can decompose $M$ and $N$ in a direct sum of the form  $M=\tilde M\oplus  M_{t}^{00}$, $N=\tilde M\oplus N_{t}^{00}$, where $\tilde M\cong \mathbb D_{\mathbb G}$, $M_t^{00}\subseteq N_t^{00}$, and $M_{t}^{00}, N_{t}^{00}$ are torsion-free groups with the same 
indices as $\G$.
\elmm
\begin{proof} Let $M_{t}^{00}, N_{t}^{00}$ be defined as before, we have $M_{t}^{00}\subseteq N_{t}^{00}$. By Lemma \ref{partie_standard}, $N/N^{00}\cong \D $  
and $N^{00}$ is the kernel of the map $st$. 
We have $st(M_{t}^{00})=\mathbb D_{N_{\mathbb G}}= st(N_{t}^{00})$
and $M/M_{t}^{00}\cong \D_{\G}\cong N/N_{t}^{00}.$ The subgroup $M_{t}^{00}$ is pure in $M$ and so has a direct summand $\tilde M$ in $M$ (see e.g. \cite{Z}). Furthermore, this direct summand is isomorphic to $\D_{\G}$. Since $\tilde M+N_{t}^{00}$ contains the kernel of $st$ and $st(\tilde M+N_{t}^{00})=st(N)$, we have $\tilde M+N_{t}^{00}=N$ and it is a direct sum.
\end{proof}

\medskip
\noindent{\bf Proof of Theorem \ref{eqL}}:
\par We follow a strategy similar to  \cite{IKT}, using the decomposition given by Lemma \ref{proj_ss}.
\par First we make the following observation. Let $H$ be an $\aleph_{1}$-saturated model of $T$ and let $\tilde H$ be a direct summand as in Lemma \ref{proj_ss} (and so isomorphic to $\D_{\G}$). Denote the projection on $\tilde H$ by $[\quad]_1$. For $g, h\in H$, we will distinguish the following configurations for  $g<h$, setting $g_1=[g]_{1}, g_2=g-[g]_{1}, h_1=[h]_{1}, h_2=h-[h]_{1}$, so that, for instance $g=g_1+g_2$ is the decomposition of $g$ along the direct factors. We need also to specify whether $g_{2}\in (\frac{i}{n})$, $h_{2}\in (\frac{j}{n})$, $0\leq i,\,j\leq n-1$, with the convention that $H^{00}$ corresponds to the cut $(\frac{0}{n})$. Note that $st(g)=g_1+\frac{i}{n},\;st(h)=h_1+\frac{j}{n}$ for some $0\leq i, j<n$.
Since $n\in N_{\G}$, multiplication by $n$ is injective and it preserves the order relation except in the following case: suppose that $a\in (\frac{i}{n})^-$ and $b\in (\frac{i}{n})^+$, then $na>nb$.
However in that case, we have that if $a\in (\frac{i}{n})$ and $na\in H^{00}_{-}$, then $a\in (\frac{i}{n})^-$. Therefore for $a, b\in (\frac{i}{n})$, if $na\in H^{00}_{-}$ and $nb\in H^{00}_{+}$, then $a<b$. Using the fact that $na\in H^{00}_{-}$ iff $-na\in H^{00}_{+}$, we can always express the order relation within a cut, back in $H^{00}_{+}$.
\par Using this decomposition, we consider the different cases for $g<h$.   Let $st(g)=g_{1}+\frac{i}{n}$ and $st(h)=h_{1}+\frac{j}{n}$ with $0\leq i,j\leq n-1$, we have the following cases :
\begin{eqnarray}\label{order1}
  st(g)&<\;\;st(h)&{\rm\; and\; either}\\
\nonumber &&(i)\; st(g)\neq 0, st(h)\neq 0 {\rm\; and\; we\; distinguish\; the\; subcases\;whether\;}\\
\nonumber &&\; g_{1}\neq h_{1}\; {\rm and\;} g_{1}=h_{1}
{\rm (in\; case\;} g_{1}=h_{1}, {\rm \;it\; implies\; that\;} i\neq j);\\
\nonumber &&(ii)\; st(g)=0, {\rm namely\;} g_1=i=0, {\rm \;and\;} g\in H^{00}_{+} {\rm and\;} st(h)\neq 0;\\
\nonumber &&(iii)\; st(h)=0, {\rm namely\;} h_{1}=j=0 {\rm \;and\;} h\in H_{-}^{00} {\rm and\;} st(g)\neq 0;\\
\label{order2} st(g)&=\;\;st(h) &{\rm namely\;} g_1=h_1, i=j {\rm \;and\; either\;}\\
\nonumber &&(i)\; g_2\in (\frac{i}{n})^-, h_2\in (\frac{i}{n})^+, {\rm \;equivalently\;} ng_2\in H^{00}_{-}, nh_2\in H^{00}_{+},\\
\nonumber &&(ii)\; g_2\in (\frac{i}{n})^+, h_2\in (\frac{i}{n})^+, {\rm\; with\;} g_{2}<h_{2}, ng_2<nh_2 {\rm \;and\;} ng_2\in H^{00}_{+}, nh_2\in H^{00}_{+}\\
\nonumber &&(iii)\; g_2\in (\frac{i}{n})^-, h_2\in (\frac{i}{n})^-, {\rm \;with\;} g_{2}<h_{2}, ng_2<nh_2 {\rm \;with\;} ng_2\in H^{00}_{-}, nh_2\in H^{00}_{-}
\end{eqnarray}

\par Now let us examine the congruence relations $D_n$ in $H^{00}_{t}$. Recall that an element $u\in H_{+}^{00}$ is in $nH$ if and only if $u-v_{i,n}\in nH_{+}^{00}$ or $v_{i,n}-u\in nH_{+}^{00}$, $0\leq i\leq n-1$ (see Notation \ref{cosets}). Moreover if $u\in (\frac{j}{m})^+$, for some $1\leq j\leq m-1$, then $u\in nH$ if and only if $mu-v_{i,nm}\in nmH_{+}^{00}$ or $v_{i,nm}-mu\in nmH_{+}^{00}$, $1\leq i\leq nm-1$.
(Note that  $mu\notin nmH_{+}^{00}$ (otherwise $u$ would belong to $H_+^{00}$). Also, since $m\in N_{\G}$, we get that $mn\in N_{\G}$.)  
\par Now suppose that $u,\;v\in H_t^{00}$, we distinguish two cases. 
\par Either $u,\;v\in (\frac{i}{m})$ and so $v-u$ or $u-v$ belong to $H_+^{00}$. We get in case $v-u\in H_+^{00}$, that 
\begin{eqnarray}\label{cong1}
H\models D_n(u,v)&\mbox{iff} & H_+^{00}\models \bigvee_{i=0}^{n-1}\;D_n(v-u,v_{i,n})
\end{eqnarray}
and similarly when $u-v\in H_+^{00}$.
 \par Or $u\in (\frac{i}{m})$ and $v\in (\frac{j}{m})$, $0\leq i\neq j\leq m-1,$ and then we get:
\begin{eqnarray}\label{cong2}
H\models D_n(u,v)&\mbox{iff} &H^{00}_{+}\models\bigvee_{i=1}^{mn-1} D_{nm}(mu+v_{i,nm},mv) 
\end{eqnarray}
(or $D_{nm}(mu,mv+v_{i,nm})$  $1\leq i\leq n.m-1$).

\medskip
\par Let $M, N$ be two $\aleph_{1}$-saturated models of $T$ with $M\subset N$. Let $\tilde M\cong \D_{\G}$ as in Lemma \ref{proj_ss} and denote the projections on $\tilde M$ by $[\quad]_{1, M}, [\quad]_{1, N}$, and the projections on $M_{t}^{00},\;N_{t}^ {00}$ by  $[\quad]_{2, M},\;[\quad]_{2, N}$. 
Since both $M^{00}$ and $N^{00}$ are models of $T_{r\G,<}$, $M^{00}$ a $L'$-substructure of $N^{00}$, by Lemma \ref{modelecompleteTrg<}), we may choose the same coset representatives of $nN^{00}$ in $N^{00}$,  as those for $nM^{00}$ in $M^{00}$, $n\in \N$. As in Notation \ref{cosets}, we will denote those coset representatives by $u_{1}=v_{0,n},\cdots,u_{n_{\G}}, v_{1,n},\cdots, v_{n-1,n}$.
Note that for all $x,y\in M$ we have 
\begin{eqnarray*}
M\models D_n(x,y) &\mbox{iff} &\tilde M \models D_n([x]_{1,M}, [y]_{1,M}) \, \mbox{ and } \, M_{t}^{00}\models D_n([x]_{2,M}, [y]_{2,M}) \\
N\models D_n(x,y) &\mbox{iff} &\tilde M \models D_n([x]_{1,M}, [y]_{1, M}) \, \mbox{ and } \, N_{t}^{00}\models D_n([x]_{2,N}, [y]_{2,N}) 
\end{eqnarray*}

 Since the function $"-"$ is existentially definable in $T$, w.l.o.g. we may consider existential formulas where $"-"$ does not occur,

\par Let $\phi(x,{\bf y})$ be a $L'_\rho$-quantifier-free formula, where $-$ does not occur, and let ${\bf a} \in M$. Assume that there exists $b\in N$ such that $N\models \phi(b, {\bf a})$. Since $<$ is a total order we may assume that  $\phi(x, {\bf a})$ is of the form   

\begin{eqnarray*}
\phi(x, {\bf a}) &:=& \bigwedge_{j\in J} t_{j}({x},{\bf a})<t_{j}'({ x},{\bf a})\; \&\; \bigwedge_{\ell\in \Lambda} t_{\ell}({x},{\bf a})=t_{\ell}'({x},{\bf a})  \, \&\;   \\ 
&&  \bigwedge_{k\in \Delta} D_{n_k}(t_{k}({x},{\bf a}), t_{k}'({x},{\bf a})) \; \&\;   \bigwedge_{k\in \Delta'} \neg D_{n_k}(t_{k}({x},{\bf a}), t_{k}'({x},{\bf a})) 
\end{eqnarray*}
where $t_j, t_\ell, t_k, t'_j, t'_\ell, t'_k$ are $L_{mo}$-terms.

\par We can write a term $t({x}, {\bf a})$ in the form $t({x},{\bf a})=s({x})+ r({\bf a})$, where $s, r$ are $L_{mo}$-terms,  and if  ${\bf a}\in M_{t}^{00}$, then so is $r({\bf a})$. Consider all terms $t_j, t_\ell, t_k, t'_j, t'_\ell, t'_k$ written in this form with the corresponding indices, e.g. $t_j({x},{\bf a})=s_j({x})+ r_j({\bf a}).$ The projections are group morphisms, so we get $[s_{j}({x})+r_{j}({\bf a})]_{1}=[s_{j}({x})]_{1}+[r_{j}({\bf a})]_{1}=s_{j}([{x}]_{1})+r_{j}({\bf [a]}_{1})$, and similarly for $[\quad]_{2}$.

\par We have the following equivalences :
\begin{eqnarray} \label{proj_egalite_congruence}
N\models t_{\ell}({x},{\bf a})=t_{\ell}'({x},{\bf a}) &\mbox{iff} & 
 \D_{\G}\models s_{\ell}([{x}]_{1})+r_{\ell}({\bf [a]}_{1})= s'_{\ell}([{x}]_{1})+r'_{\ell}({\bf [a]}_{1}) \mbox{\;\;and} 
 \nonumber \\
&& N^{00}_{t}\models s_{\ell}([{x}]_{2})+r_{\ell}({\bf [a]}_{2})= s'_{\ell}([{x}]_{2})+r'_{\ell}({\bf [a]}_{2}) \\
N\models D_{n_k}(t_{k}({x},{\bf a}), t_{k}'({x},{\bf a})) & \mbox{iff}& \D_{\G}\models D_{n_k}( s_{k}([{x}]_{1})+r_{k}({\bf [a]}_{1}),  s'_{k}([{x}]_{1})+r'_{k}({\bf [a]}_{1})) \mbox{\;\;and} 
 \nonumber \\ 
&& N^{00}_{t}\models D_{n_k}( s_{k}([{x}]_{2})+r_{k}({\bf [a]}_{2}),  s'_{k}([{x}]_{2})+r'_{k}({\bf [a]}_{2}))
\end{eqnarray}
\par Using (\ref{cong1}), (\ref{cong2}), we can express the congruence conditions $D_{n_k}$ on the second projection, back in the subgroup $N^{00}$ in the following way.

\par Assume first that $s_{k}([{x}]_{2})+r_{k}({\bf [a]}_{2}),\;s'_{k}([{x}]_{2})+r'_{k}({\bf [a]}_{2}))\in (\frac{i}{m})$, 
then we get: 
\begin{eqnarray} \label{congruence1}
\nonumber N^{00}_{t}\models D_{n_k}( s_{k}([{x}]_{2})+r_{k}({\bf [a]}_{2}),  s'_{k}([{x}]_{2})+r'_{k}({\bf [a]}_{2}))&\mbox{iff}&\\
 N_+^{00}\models \bigvee_{i=0}^{n_{k}m-1}\;D_{n_{k}m}(ms_{k}([{x}]_{2})+mr_{k}({\bf [a]}_{2}),v_{i,n_{k}m}+ms'_{k}([{x}]_{2})+mr'_{k}({\bf [a]}_{2})))
\end{eqnarray}

\par Assume now that $s_{k}([{x}]_{2})+r_{k}({\bf [a]}_{2})\in (\frac{i}{m})$ and $s'_{k}([{x}]_{2})+r'_{k}({\bf [a]}_{2}))\in (\frac{j}{m})$, $0\leq i\neq j\leq m-1$, then we get:
\begin{eqnarray} \label{congruence2}
\nonumber 
N^{00}_{t}\models D_{n_k}( s_{k}([{x}]_{2})+r_{k}({\bf [a]}_{2}),  s'_{k}([{x}]_{2})+r'_{k}({\bf [a]}_{2}))&\mbox{iff} &\\
N_+^{00}\models\bigvee_{i=1}^{n_{k}m-1} D_{n_{k}m}(ms_{k}([{x}]_{2})+mr_{k}({\bf [a]}_{2})+v_{i,n_{k}m},ms'_{k}([{x}]_{2})+mr'_{k}({\bf [a]}_{2}))), 
\end{eqnarray}
(or $D_{n_{k}m}(ms_{k}([{x}]_{2})+mr_{k}({\bf [a]}_{2}),ms'_{k}([{x}]_{2})+mr'_{k}({\bf [a]}_{2}))+v_{i,n_{k}m})$).
\medskip
\par W.l.o.g., we may assume that $[b]_{1}\in M$ and that not only $[b]_{2}\in (\frac{i}{n})$, $0\leq i<n$, $n\in N_{\G}$, but all the components of the terms $[{\bf a}]_{2}$ also belong to a cut of the form $(\frac{j}{n})$, for some $0\leq j<n$.
Since $n\in N_{\G}$ and so multiplication by $n$ is injective, we obtain the following equivalence: 
\begin{eqnarray}\label{egalite}
\nonumber N^{00}_{t}\models s_{\ell}([{b}]_{2})+r_{\ell}({\bf [a]}_{2})= s'_{\ell}([{b}]_{2})+r'_{\ell}({\bf [a]}_{2})&\mbox{iff} &
\\
N^{00}\models n.s_{\ell}([{b}]_{2})+n.r_{\ell}({\bf [a]}_{2})= n.s'_{\ell}([{b}]_{2})+n.r'_{\ell}({\bf [a]}_{2})
\end{eqnarray}
(We could have used that it is the negation of being in strict order relation).
Recall that $st(s_{j}(b)+r_{j}({\bf a}))$ (respectively $st(s'_{j}(b)+r'_{j}({\bf a}))$) is determined by the type of $b$ (over $\D_{\G}$) and since the first projections of $s_{j}(b)+r_{j}({\bf [a]})$, respectively $s'_{j}(b)+r'_{j}({\bf [a]})$ belong to $\tilde M$, the cuts of the form $\frac{i}{n}$, $0\leq i\leq n-1$, $n\in N_{\G}$, to which their second projections belong to, are also determined by the type of $b$. Observe the following.
\par $(i)$ We saw in (\ref{order1}), (\ref{order2}) that the truth of an atomic formula of the form \\$s_{j}(b)+r_{j}({\bf a}))<s'_{j}(b)+r'_{j}({\bf a})$ is determined by on one hand the first projection of these terms, the cuts to which their second projections belong to and the order type of $n.(s_{j}([b]_{2})+r_{j}({\bf [a]_{2}})),\;n.(s'_{j}([b]_{2})+r'_{j}({\bf [a]_{2}}))$ in $N^{00}$. 
\par $(ii)$ We saw in (\ref{congruence1}), (\ref{congruence2}) that the truth of congruence condition of the form \\$D_{n_k}( s_{k}([{b}]_{2})+r_{k}({\bf [a]}_{2}),  s'_{k}([{b}]_{2})+r'_{k}({\bf [a]}_{2}))$ can also be expressed in $N^{00}$ using cosets representatives $v_{i,n_{k}.n}$, for some $i$.
\par $(iii)$ Finally in (\ref{egalite}), we reduce the truth in $N^{00}_{t}$ of $s_{\ell}([{b}]_{2})+r_{\ell}({\bf [a]}_{2})= s'_{\ell}([{b}]_{2})+r'_{\ell}({\bf [a]}_{2})$, to a statement in $N^{00}$.
\par  In each of the above three cases,  all the requirements put together can be expressed as a partial type of quantifier free formulas, say $\Phi (x, \bf a)$, and we have $N\models \Phi (b, \bf a).$
Now it remains to note that  $\Phi(x, \bf a)$ being satisfied by an element $b\in N$, implies that 
it is  finitely consistent in $M$, and so if $c$ realises it in $M$, then $M\models \phi(c,{\bf a})$. 
\par We use on one hand that $M/M^{00}\cong N/N^{00}\cong\D$ (Lemma \ref{partie_standard}), 
and that $M^{00}$ an $L'$-substructure of $N^{00}$ and so an elementary substructure by (Lemma \ref{eq-mrG}). Let $\Phi(y, [{\bf a}]_1)$ be the partial type of quantifier free formulas determined by $\Phi(x, \bf a)$ and satisfied by $[b]_1$, and let $\Psi(z, [{\bf a}]_2)$ be the partial type of quantifier free formulas determined by $\Phi(x, \bf a)$ and satisfied by $[b]_2$. We have $\mathbb D_{\mathbb G}\models  \Phi([{b}]_1, [{\bf a}]_1)$ and $N^{00}\models \psi([{b}]_2, [{\bf a}]_2)$. Since $M^{00}\subseteq_{ec} N^{00}$, there is ${d}\in M^{00}$ such that $M^{00}\models \psi({d}, [{\bf a}]_2)$. In case $d$ is of the form $n.d'$, with $n\in N_{\G}$, then there is $d'$ in $M$ belonging to a cut $(\frac{i}{n})$, $1\leq i\leq n-1$. Let ${c}=[{b}]_1+{d'}$. Then ${c}\in M$, $[{c}]_1=[{ b}]_1, [{ c}]_2={d'}$, and by 
$(\ref{order1})$, (\ref{order2}), $(\ref{congruence1}),\; (\ref{congruence2})$ and $(\ref{egalite})$, we conclude that $M\models \phi(c,{\bf a})$. This concludes the proof of Theorem \ref{eqL}. \hfill $\Box$
\medskip
\par Recall that a first-order theory is said to be NIP if no formula has the independence property. Y. Gurevich and P. Schmitt showed that the theories of ordered abelian groups are NIP (\cite{GS}).
\cor \label{completenessT} The theory $T$ is complete, and decidable whenever $N_{\G}$ is recursively enumerable. The theory $T$ is NIP.
\ecor
\begin{proof} 
We first show the completeness of $T$ by proving that two $\aleph_{1}$-saturated models of $T$ are elementarily equivalent. We follow the same strategy as in Theorem \ref{eqL}. Let $\si$ be a sentence true in some $\aleph_{1}$-saturated model $H$ of $T$. By Lemma \ref{proj_ss}, $H$ decomposes as $\tilde H$, isomorphic to $\D_{\G}$, and $H^{00}_{t}$. Since $T$ is model-complete, we may assume that $\si$ is of the form $\exists \bar x\;\phi(\bar x)$, where $\phi(\bar x)$ is a conjunction of 
formulas which are atomic or negated atomic. Let $\bf a$ be a tuple of elements of $H$ such that $H\models \phi(\bf a)$. By the proof of Theorem \ref{eqL}, this is equivalent to 
the requirements that two partial types are satisfied in respectively $\tilde H$ and $H^{00}$.  
Since $\tilde H$ is isomorphic to $\D_{\G}$, finite satisfiability of the first partial type does not depend on the model we are considering and for the second partial type, we apply the completeness of $T_{r\G,<}$ (Lemma \ref{eqTrg<}). In particular $T$ axiomatizes the theory of the standard model $\G$.  
 \par The model $\G$ is NIP since we can interpret any formula in the ordered abelian group $(G,+,0,1,<)$ where we can use the result of Y. Gurevich and P. Schmitt and therefore the complete theory $T$ is NIP. 
 \par Finally, the completeness of $T$ implies its decidability in case $T$ has an r.e. axiomatization, namely if $N_{\G}$ is r.e.
 \end{proof}

  \par Note that since $T$ is NIP, we know that in any definable group in a model of $T$, there is a smallest type-definable subgroup of bounded index \cite{Sh}. Recall that $n_{\G}$ is equal to the index $[H:nH]$, where $H$ is a model of $T$. We suspect that  $\bigcap_{\{n\in\mathbb N: n_{\G} \in \N\}} n H^{00}$ is the smallest type definable subgroup of $H$ of bounded index.

\medskip
\par Note that following the proof \cite[Example 2.10]{KS} that if $(G,+,<)$ is an ordered abelian group, then its theory is not strongly$^2$ dependent, one can show that the theory of $H$ is not strongly$^2$ dependent.
\medskip
\par Now, the ordered group of the integers has NIP, as well as the group of decimals $\D$ (cf. Corollary \ref{completenessT}). In view of the result of Bouchy-Finkel-Leroux recalled in the introduction on the decomposition of definable subsets in the expansion of the ordered additive group of real numbers with the integer part function, one can ask whether  
one could deduce that its theory has also NIP. 
 However, we haven't seen how to combine the two other results. We present instead a direct proof of NIP in the following proposition. A stronger version of this result is known (see \cite[Proposition 3.1]{dolich-goodrick}),  but we were not aware of that  unpublished result  at the time.
\prop The theory of the structure $(\R,+,-,\lfloor .\rfloor,<,0,1,\equiv_{n}; \;n\in \omega)$ is NIP. 
\eprop
\begin{proof} Set the language $\L:=\{+,-,\lfloor .\rfloor,<,0,1,\equiv_{n}; \;n\in \omega^*\}$. Weispfenning showed quantifier elimination in $\L$ (\cite[Theorem 3.1]{W}). Since NIP formulas are closed under boolean combinations (see for example  \cite[Lemma 2.9]{S}), it suffices to show that atomic formulas are NIP. 
\par Let $\psi(x, \bf{y})$ be an atomic $\L$-formula and let $(a_{\ell})_{\ell\in I}$ be an indiscernible sequence of elements in a saturated model $H$ of $T_{\mathcal{R}}$, and consider a tuple $\b\in H$, then let us show that the truth value of $\psi(a_{\ell},\b)$ is eventually constant. We can write $a_i=\lfloor a_i \rfloor +(a_i-\lfloor a_i \rfloor)=\lfloor a_i \rfloor-a_{i}^*$, using the notation $x^*:=x-\lfloor x \rfloor$. We have $\lfloor x_{1}+x_{2} \rfloor=\lfloor x_{1} \rfloor+\lfloor x_{2} \rfloor$, if $x_{1}^*+x_{2}^*<1,$ and $\lfloor x_{1}+x_{2} \rfloor=\lfloor x_{1} \rfloor+\lfloor x_{2} \rfloor+1,$ if $x_{1}^*+x_{2}^*\geq 1$. Depending on which subinterval of the form $[\frac{i}{n}, \frac{i+1}{n}[$, $0\leq i<n$, an element $x$ belongs to, we can express 
$\lfloor n.x\rfloor$ in terms of $\lfloor x\rfloor$ (and so $(n.x)^*$ in terms of $x^*$).
\par Remark that if $(a_{\ell})_{\ell\in I}$ is an indiscernible sequence, then $(\lfloor a_{\ell}\rfloor)_{\ell\in I}$ is an indiscernible sequence of the reduct $(\Z,+,-,0,\equiv_{n}; n\in \omega^*)$ and $(a_{\ell}-\lfloor a_{\ell}\rfloor)_{\ell\in I}$ is an indiscernible sequence of the reduct $(\R,+,-,0,1,<)$. We now indicate how to reduce to these two structures whose theories are NIP.
\par The atomic formulas are of the form $t(x,\bar y)=0,\; 0<t(x,\bar y), 0>t(x,\bar y), t(x,\bar y)\equiv_{n} 0$. We write $t(x,\bar y)$ as $n.x+s(\bar y)$ with $s(\bar y)$ an $\L$-term and w.l.o.g. we may assume that $n\in \N^\ast$.
\par Now $t(x,\bar y)=0$ iff $\lfloor t(x,\bar y)\rfloor=0$ and $t(x,\bar y)-\lfloor t(x,\bar y)\rfloor=0$. \par Also we have  
\[
\lfloor n.x+s(\bar y)\rfloor= \left\{
\begin{array}{lcl}
        \lfloor n.x\rfloor+\lfloor s(\bar y)\rfloor &\mbox{if}& (n.x)^*+s(\bar y)^*<1 \\
         \lfloor n.x\rfloor+\lfloor s(\bar y)\rfloor+1 &\mbox{if}&(n.x)^*+s(\bar y)^*\geq 1\\
         \end{array}
\right.
\]
\par So, we get $t(x,\bar y)=0$ iff 
\[
\left\{
\begin{array}{lc}
        \lfloor n.x\rfloor+\lfloor s(\bar y)\rfloor=0\;\&\; (n.x)^*+s(\bar y)^*=0&\mbox{or} \\
         \lfloor n.x\rfloor+\lfloor s(\bar y)\rfloor+1=0\;\&\; (n.x)^*+s(\bar y)^*=1 &\\
         \end{array}
\right.
\]
\par Now $t(x,\bar y)>0$ iff $\lfloor t(x,\bar y)\rfloor>0$ or ($\lfloor t(x,\bar y)\rfloor=0\;\&\;t-\lfloor t\rfloor>0$.
\par So, we get $t(x,\bar y)>0$ iff 
\[
\left\{
\begin{array}{lrrl}
        \lfloor n.x\rfloor+\lfloor s(\bar y)\rfloor>0&\mbox{if}& (n.x)^*+s(\bar y)^*<1&\mbox{or}\\
         \lfloor n.x\rfloor+\lfloor s(\bar y)\rfloor+1>0&\mbox{if}& (n.x)^*+s(\bar y)^*\geq 1&\mbox{or}\\
          \lfloor n.x\rfloor+\lfloor s(\bar y)\rfloor=0\;\&\; (0<(n.x)^*+s(\bar y)^*<1)&&\mbox{or}\\
         \lfloor n.x\rfloor+\lfloor s(\bar y)\rfloor+1=0\;\&\; (1<(n.x)^*+s(\bar y)^*). &&
         \end{array}
\right.
\]
\par Finally $t(x,\bar y)\equiv_{n} 0$ iff $t(x,\bar y)=\lfloor t(x,\bar y)\rfloor$ and $\lfloor t(x,\bar y)\rfloor\equiv_{n}0$.
Again we transform $\lfloor t(x,\bar y)\rfloor$ and we get $t(x,\bar y)\equiv_{n} 0$ iff
\[
\left\{
\begin{array}{lrrl}
        (n.x)^*+s(\bar y)^*=0\;\&\; (\lfloor n.x\rfloor + \lfloor s(\bar y)\rfloor\equiv_{n}0)&\mbox{if}&(n.x)^*+s(\bar y)^*<1&\mbox{or}\\
  
  (n.x)^*+s(\bar y)^*+1=0\;\&\;\lfloor n.x\rfloor + \lfloor s(\bar y)\rfloor+1\equiv_{n}0&\mbox{if}&(n.x)^*+s(\bar y)^*\geq 1
         \end{array}
\right.
\]
From that analysis together with the remark above, we get that the truth value of an atomic formula $\psi(a_{\ell},\bf{b})$ is eventually constant. \end{proof}

\section{Quantifier elimination for all decimals}\label{qe}

In \cite{belair-point2014} we showed quantifier elimination for the structure $\mathbb D$ in a language with extra unary function symbols, and mentioned that these extra symbols could be eliminated to get quantifier elimination down to the language $L=\{+, -, 0, <\}$. In this section we set $\mathbb G=\mathbb D$ and we show how to eliminate the extra symbols. Note that given the connection with the circular group formalism, this also follows from the quantifier elimination established by Lucas (see \cite{lucas_ddg}, \cite{leloup}).

Note that  for $\mathbb G=\mathbb D$, the set of torsion elements $\{ c_n : n\ge 1\}$ is equal to $\{ \frac{i}{n} : n\ge 2, 1\le i<n\}$, and our convention yields $\rho^{\mathbb D}_{\frac{i}{n}}=\frac{i}{n}.$ Also, since we now have all the possible torsion, we get $H^{00}_t=H^{00}$. 

\par For each natural number $n, n\ne 0, 1$, consider the function $f_n:\mathbb D\to \mathbb D$ defined by $f_n(x)=$ the smallest $y$ such that $ny=x$, that is $f(x)=\frac{x}{n}$. Note that in $\mathbb D$ 
we have  $f_n(x)=z\leftrightarrow (nz=x\; \&\; \bigwedge_{0\le i\le n-1} z\le z+\frac{i}{n})$. In \cite{belair-point2014}, we used the relation $\frac{i}{n}=2if_n(\frac{1}{2})$ to reduce the set of constant symbols $\rho_n$ to just $\rho_{\frac{1}{2}}$.

For the sake of easier readability in this section, we will use the symbols $\frac{i}{n}$ as constant symbols $\rho_m$ in the language $L_\rho$.

We let $L_{\rho,f}=L_{\rho}\cup \{ f_n : n\ge 2\}$. Since $f_n$ is definable in $\mathbb D$ for the language $L_\rho$, it follows that the theory of $\mathbb D$ is also model-complete in $L_{\rho,f}$. It turns out that in $L_{\rho,f}$ our axioms for $\mathbb D$ can be formulated as universal axioms as we showed in \cite{belair-point2014}, and thus quantifier elimination in $L_{\rho,f}$ follows. For the benefit of the reader we reproduce the axioms here.

\begin{definition} Let 
$\mathcal T$ be the following set of axioms in $L_{\rho,f}$. 
\begin{enumerate}
\item[(1)] The axioms for an abelian group;
\item[(2)] the relation $<$ is a strict order relation with $0$ as a minimum; 
\item[(3)] $\forall x \forall y [(x < y \rightarrow (x < x+f_2(y-x) < y)) \& (x\ne 0\rightarrow x< x+f_2(-x)\;)]$;
\item[(4)] $\forall x\forall y (x<y \rightarrow -y<-x)$;
\item[(5)] $\forall x \left(\frac{1}{2}\ne 0 \; \& \; 2\frac{1}{2}=0\; \&\;  (2x=0 \rightarrow (x=0 \vee x=\frac{1}{2}))\right)$.
\end{enumerate}
The following three axioms for each natural number  $n\ne 0, 1$.
\begin{enumerate}
\item[(6)]  $\forall x\forall y \; (nf_n(x)=x \ \& f_n(x)\le x \ \& \  ( ny=x\rightarrow f_n(x)\le y))$ ;
\item[(7)] $\forall x \left(\bigwedge_{1\le i<n-1} (\frac{i}{n}\ne 0 \; \&\; n\frac{i}{n}=0) \; \& \; \left(nx=0\rightarrow \bigvee_{0\le i<n-1} x=\frac{i}{n}\right)\right)$ ;
\item[(8)] $\frac{1}{n}<\frac{2}{n}<\ldots<\frac{n-1}{n}$.
\item[(9)] For all natural numbers  $m, n, m\prime, n\prime \ne 0,1$, the following axiom :
$$\bigwedge_{i,j,i\prime, j\prime, 0<\frac{i}{n}+\frac{j}{m}<1}  
\left((\frac{i\prime}{n\prime}\le x\le \frac{i}{n} \& \frac{j\prime}{m\prime}\le y\le \frac{j}{m}) \rightarrow  \frac{i\prime}{n\prime} + \frac{j\prime}{m\prime} \le x+y\le \frac{i}{n} + \frac{j}{m}\right)$$ 
where $0\le i\prime<n\prime, 0\le i<n, 0\le j\prime<m\prime, 0\le j<m$  and $\frac{0}{k}:=0$.
\item[(10)] For all natural number $n\ge 3$, the following axiom  :
$$ (x\le \frac{1}{n} \ \& \ y\le\frac{1}{n} \ \& \ z\le \frac{1}{n} ) \rightarrow \left( (x\le y \leftrightarrow x+z\le y+z) \ \& \ (x\le x+y) \right).$$
\end{enumerate}
\end{definition}

Recall the axiomatization $T$ of $\mathbb D$ in the language $L_\rho$, and the definition of  $\frac{1}{n}$ in $\mathbb D$ : $z=\frac{1}{n} \leftrightarrow z\ne 0 \; \& \; nz=0 \; \& \; \wedge_{2\le j\le n-1} z< j z$. Let $T'$ be the axiomatization of $\mathbb D$ in the language $L$ obtained from $T$, by replacing each constant symbol $\frac{i}{n}$ using the definition of $\frac{1}{n}$.  For example, $x\le\frac{2}{3}$ becomes $\exists u  (u\ne 0\;\& \;3u=0\;\&\; u<2u\; \&\; x\le u)$. 
We now show how to eliminate the symbols $f_n$ and $\frac{i}{n}$ in the {\em theorems} of $T'$.

\begin{lemma} \label{elimine_i/n}
For every $n, i\in\mathbb N, n\ge 2, 1\le i<n$, there exist quantifier-free $L$-formulas $\theta_{i,n}(x), \varphi_{n,i} (x)$ such that in every model of $T'$ we have that for all $x\ne 0$, $x=\frac{i}{n} \leftrightarrow \theta_{i,n}(x)$ and $x<\frac{i}{n} \leftrightarrow \varphi_{i,n}(x)$ hold.
\end{lemma}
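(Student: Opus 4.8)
The plan is to express both relations through a single order-theoretic invariant of $x$: the number of \emph{descents} in the finite sequence $0\cdot x,\,1\cdot x,\,\dots,\,n\cdot x$, where a descent at position $j\in\{1,\dots,n-1\}$ means $(j+1)x<jx$. The key point, which I verify directly in $\mathbb D$, is that this number of descents equals the integer $k$ with $\frac{k}{n}\le x<\frac{k+1}{n}$; that is, it records which of the $n$ half-open segments cut out by $\frac1n,\dots,\frac{n-1}{n}$ the element $x$ lies in. Granting this, $x<\frac in$ is equivalent to ``$x$ lies in one of the first $i$ segments'', i.e.\ to having at most $i-1$ descents, while $x=\frac in$ is equivalent to ``$nx=0$ and $x$ has exactly $i$ descents'', since $\frac in$ is the unique $n$-torsion point in its segment.

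First I would write down the formulas. For $j=1,\dots,n-1$ let $\delta_j(x)$ be the atomic $L$-formula $(j+1)x<jx$ (where $jx$ abbreviates the term $x+\cdots+x$). I set
\[
\varphi_{i,n}(x)\ :=\ \neg\!\!\bigvee_{\substack{S\subseteq\{1,\dots,n-1\}\\ |S|=i}}\ \bigwedge_{j\in S}\delta_j(x),
\]
a quantifier-free $L$-formula expressing ``at most $i-1$ of the $\delta_j$ hold'' (note that $\varphi_{n,n}$ is vacuously true, as $\{1,\dots,n-1\}$ has no $n$-element subset), and
\[
\theta_{i,n}(x)\ :=\ (nx=0)\ \wedge\ \varphi_{i+1,n}(x)\ \wedge\ \neg\varphi_{i,n}(x),
\]
so that $\theta_{i,n}$ expresses ``$nx=0$ and exactly $i$ of the $\delta_j$ hold''. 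Both are quantifier-free in $L$.

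The heart of the argument is the descent-counting identity. Writing $x$ as its real representative $\xi\in[0,1[$, one has $jx=\{j\xi\}$, and for each $j$ the difference $\{(j+1)\xi\}-\{j\xi\}$ equals $\xi$ if no wraparound modulo $1$ occurs and $\xi-1$ if one does; moreover $\delta_j(x)$ holds exactly when a wraparound occurs, and the transition at $j=0$ never wraps. Summing the telescoping differences from $0$ to $nx$ gives $\{n\xi\}=n\xi-W$, where $W$ is the total number of wraparounds, whence $W=\lfloor n\xi\rfloor=k$; so the number of descents among $j=1,\dots,n-1$ is exactly $k$. With this, in $\mathbb D$ we obtain $x<\frac in\Leftrightarrow k<i\Leftrightarrow\varphi_{i,n}(x)$ and $x=\frac in\Leftrightarrow(nx=0\wedge k=i)\Leftrightarrow\theta_{i,n}(x)$; the only care needed is the behaviour at the torsion points, which are the left endpoints of the segments, and this I check directly.

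Finally, to pass from $\mathbb D$ to an arbitrary model of $T'$, I note that the asserted equivalences become $L$-sentences once $\frac in$ is replaced by its $L$-definition, namely $i$ times the unique $z$ with $z\ne 0\,\&\,nz=0\,\&\,\bigwedge_{2\le j\le n-1}z<jz$. Since $T'$ axiomatizes the complete theory of $\mathbb D$, validity in $\mathbb D$ yields validity in every model of $T'$. I expect the main obstacle to be precisely the descent-counting identity together with the bookkeeping at the segment endpoints; once that combinatorial fact is established, both the construction of the formulas and the transfer to all models are routine.
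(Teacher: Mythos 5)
Your proposal is correct, but its combinatorial core is genuinely different from the paper's. Both proofs handle the passage to arbitrary models of $T'$ in the same way — the paper opens with ``it suffices to check in $\mathbb D$,'' which is exactly your appeal to the completeness of $T'$ — so the difference lies entirely in how membership in the segments $[\frac{k}{n},\frac{k+1}{n}[$ is captured by quantifier-free $L$-formulas. The paper proceeds in stages: it characterizes $x=\frac{i}{n}$ (after reducing to $i,n$ coprime) by the permutation $\sigma$ recording the order of $x,2x,\ldots,(n-1)x$; proves $x<\frac{1}{n}\leftrightarrow x<2x<\cdots<nx$ directly; and then treats $x<\frac{i}{n}$ for $1<i<n$ by an induction on $n$ that subdivides $[0,1[$ into subintervals with rational endpoints, each determined by a distinct order type of $(x,2x,\ldots,nx)$, with a pigeonhole argument needed to separate adjacent subintervals. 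Your descent-counting identity — that the number of $j\in\{1,\ldots,n-1\}$ with $(j+1)x<jx$ equals $\lfloor n\xi\rfloor$ for the real representative $\xi$ of $x$, proved by telescoping the wraparound contributions — replaces all of this with a single uniform computation: no induction on $n$, no coprimality reduction, and the equality and inequality formulas come out simultaneously ($x<\frac{i}{n}$ is ``at most $i-1$ descents,'' $x=\frac{i}{n}$ is ``$nx=0$ and exactly $i$ descents''). The trade-off is that the paper's finer analysis yields more than the lemma asks for, namely a cell decomposition of $[0,1[$ into rational intervals on each of which the full order type of the multiples of $x$ is constant, whereas your coarser invariant extracts exactly the information needed and gives a shorter, cleaner proof; both constructions produce quantifier-free formulas of comparable (exponential) size.
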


\begin{proof}  
It suffices to check in $\mathbb D$. Suppose $x\ne 0$. First recall that $x=\frac{1}{2} \leftrightarrow 2x=0$ and $x<\frac{1}{2} \leftrightarrow x<2x$. Let's consider the relation $x=\frac{i}{n}$. We may assume that $i,n$ are coprime, and then we have $x=\frac{i}{n} \leftrightarrow nx=0 \, \& \, \sigma(1) x<\ldots < \sigma(n-1)x$, where $\sigma$ is the permutation of  $1,\ldots, n-1$ which corresponds to the correct ordering of  $\frac{i}{n}, 2\frac{i}{n}, \ldots, (n-1)\frac{i}{n}$ modulo $1$. Now consider 
the relation $x<\frac{i}{n}.$ We claim that  $x<\frac{1}{n} \leftrightarrow  x<2x<\ldots < nx $. Certainly $x<\frac{1}{n} \rightarrow x<2x<\ldots < nx $ ( from axiom (9), when $n\ge 3$).
For the reciprocal, with $n\ge 3$, if $x<2x<\ldots < nx $, then by induction we have $x<\frac{1}{n-1}$. If $\frac{1}{n} \le x$, then $\frac{n-1}{n}\le (n-1)x$ and $nx<\frac{1}{n-1}$ (computing modulo $1$). Since $\frac{n-1}{n}>\frac{1}{n-1}$, when $n\ge 3$, we would obtain $nx<(n-1)x$ and a contradiction. Hence  $x<\frac{1}{n}$ must hold, as wanted.

Now let's consider the relation  $x<\frac{i}{n}$, $1<i<n, n\ge 3$. It suffices to show, by induction on $n$, that we can subdivide evey interval $[\frac{i}{n}, \frac{i+1}{n}[, 0\le i<n$, in subintervals whose extremities belong to  $\cup_{k=2}^n \{0, \frac{1}{k}, \ldots, \frac{k-1}{k}\}$ and such that within each such subinterval  $I$ we have $\sigma_I(1)x<\sigma_I(2)x<\cdots<\sigma_I(n-1)x$ for some permutation  $\sigma_I$ of $1, \ldots, n$, and such that $\sigma_I\ne \sigma_J$, when $I\ne J.$ The cases $n=2,3$ follow from the previous remarks. Let's consider the  induction step from $n$ to $n+1$. 

First note that the interval $]0, \frac{1}{n+1}[$ is defined by  $x<2x<3x<\cdots<(n+1).x$, and  $]\frac{n}{n+1}, 1[$ is defined by  $x>2x>3x>\cdots>(n+1).x$. In an interval $[\frac{k-1}{n+1}, \frac{k}{n+1}[$, $0< k\leq n$, we have that  $(n+1).x$ modulo $1$ is equal to  $(n+1).x-(k-1)$. Let's compare it with $m.x-\ell$, where $1\leq m\leq n$ and $0\leq \ell<m$. Note that if $k-1\leq (n+1).x\leq k$ and $\ell\leq m.x\leq \ell+1$, then $\ell\leq k-1$. Now, if $(n+1).x-(k-1)=m.x-\ell$, then $(n+1-n).x=k-1-\ell$ and so $x=\frac{k-1-\ell}{n+1-m}$. So that if there is no subdivision inside our interval, we have either  $(n+1).x-(k-1)< m.x-\ell$ for all $x$, or $(n+1).x-(k-1)> m.x-\ell$ for all $x$, and so coming back modulo $1$, either $(n+1).x< m.x$, or $(n+1).x> m.x$.

We are left to check that we could subdivide any two intervals  $[\frac{k-1}{n+1}, \frac{k}{n+1}[$, $[\frac{k'-1}{n+1}, \frac{k'}{n+1}[$, $1\leq k\neq k'\leq n+1$ in such way that they are associated to two distinct permutations of $\{1,\cdots,n+1\}$. By the pigeonhole principle, we have that in the remaining  $n-1$ intervals we have exactly one point of the form  $\frac{k}{n}$, $0<k<n$ (there cannot be two in the same interval). Hence, by induction, we can distinguish an interval of type  $[\frac{k-1}{n+1}, \frac{k}{n+1}[$, $1\leq k\leq n+1$, from an interval not adjacent to it. We are left to be able to distinguish adjacent intervals. Let's consider $\frac{k}{n+1}$. For sufficiently small positive $\varepsilon$, we can arrange  $(n+1).(\frac{k}{n+1}-\varepsilon)$  (mod $1$)  to be near $1$ whereas $(n+1).(\frac{k}{n+1}+\varepsilon)$ (mod $1$) is near $0$. We obtain that  $x< (n+1).x $ to the left of   $\frac{k}{n+1}$, and $x> (n+1).x $ to the right of $\frac{k}{n+1}$, which is sufficent to distinguish the two intervals which are adjacent in  $\frac{k}{n+1}$.
\end{proof}

\begin{theorem} \label{eq-L} 
The theory $T'$ admits quantifier elimination in the language $L$.
\end{theorem}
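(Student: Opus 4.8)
The plan is to bootstrap from the quantifier elimination already available in the richer language $L_{\rho,f}$ and then to remove, one family at a time, the auxiliary symbols $f_n$ and the torsion constants $\frac{i}{n}$. Recall that $\mathcal{T}$ admits quantifier elimination in $L_{\rho,f}$, and that every model of $T'$ expands uniquely to a model of $\mathcal{T}$: divisibility holds (since $n_{\mathbb{D}}=1$), so the defining condition $f_n(x)=z \leftrightarrow (nz=x\; \&\; \bigwedge_{0\le i\le n-1} z\le z+\frac{i}{n})$ defines a total function, and each $\frac{i}{n}$ is pinned down by its defining $L$-condition. Consequently, given an $L$-formula $\phi$, viewed as an $L_{\rho,f}$-formula it is $\mathcal{T}$-equivalent to a quantifier-free $L_{\rho,f}$-formula $\chi$. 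It therefore suffices to prove the reduction step: every quantifier-free $L_{\rho,f}$-formula is $T'$-equivalent to a quantifier-free $L$-formula. As quantifier-free formulas are Boolean combinations of atomic ones, the whole problem reduces to rewriting each atomic $L_{\rho,f}$-formula as a quantifier-free $L$-formula.

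First I would eliminate the function symbols $f_n$. The second clause of the characterization above simply records that $f_n(x)$ is the representative lying in the initial segment $[0,\frac{1}{n}[$, i.e. $f_n(x)<\frac{1}{n}$. The crucial structural fact is that, by Lemma \ref{consequencesT}(d), multiplication by $n$ is order-preserving on this initial segment. Hence any comparison (by $<$ or $=$) between $L_{\rho,f}$-terms can be transferred, after multiplying the relevant $f_n$-values by $n$, to a comparison between $L_\rho$-terms, at the cost of case-splitting on the finitely many carries that occur when the modular sum of two values in $[0,\frac{1}{n}[$ lands in $[\frac{1}{n},\frac{2}{n}[$ (each such carry contributes a torsion correction $\pm\frac{1}{n}$, itself an $L_\rho$-constant). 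Pushing the innermost $f_n$ outward in this way and iterating, one removes all occurrences of the $f_n$ and is left with atomic $L_\rho$-formulas.

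It then remains to eliminate the constants $\frac{i}{n}$, i.e. to pass from $L_\rho$ to $L$. Here I would invoke Lemma \ref{elimine_i/n}, which already provides quantifier-free $L$-formulas $\theta_{i,n}, \varphi_{i,n}$ equivalent to $x=\frac{i}{n}$ and $x<\frac{i}{n}$. To treat a constant occurring inside a term rather than in isolation, I would subdivide $[0,1[$ into the finitely many subintervals whose endpoints are the torsion elements $\frac{i}{n}$ appearing in the atomic formula at hand — exactly the subdivision used in the proof of Lemma \ref{elimine_i/n} — and on each piece express the position of each term, and of each modular sum, relative to those torsion points purely in terms of the order of integer multiples of the variables; membership of a variable in such a subinterval is quantifier-free $L$-definable via $\theta_{i,n}, \varphi_{i,n}$. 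Collecting the finitely many cases yields a quantifier-free $L$-formula equivalent to the original atomic $L_\rho$-formula, completing the reduction.

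The main obstacle is the interaction between the order and the modular group operation: because $+_1$ is not order-compatible on all of $\mathbb{D}$, neither the $f_n$-elimination nor the constant-elimination can be carried out by naive term rewriting, and each must proceed through a case analysis on carries and on the subinterval (determined by the finitely many relevant torsion points) in which each term lies. Keeping this bookkeeping finite — which is guaranteed because any given atomic formula mentions only finitely many symbols $f_n$ and $\frac{i}{n}$, hence only finitely many torsion points — is the delicate part, and the order-preservation of multiplication by $n$ on $[0,\frac{1}{n}[$ from Lemma \ref{consequencesT}(d) is precisely what makes the carry analysis terminate.
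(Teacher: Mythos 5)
Your proof is essentially correct but follows a genuinely different route from the paper's. The paper proves Theorem \ref{eq-L} model-theoretically, via the substructure-completeness test of Lemma \ref{lem-eqL}: for a common substructure $A$ of models $M,N$ of $T'$ (all assumed $\aleph_1$-saturated), Lemma \ref{elimine_i/n} forces the standard part and the $00$-part of each element of $A$ to be the same whether computed in $M$ or in $N$; one then chooses direct-sum decompositions $M=\tilde M\oplus M^{00}$ and $N=\tilde N\oplus N^{00}$ compatible with $A=\tilde A\oplus A^{00}$ (using pure-injectivity of $A$), and reruns the Feferman--Vaught-type computation of Theorem \ref{eqL}, with Lemma \ref{eq-mrG} used in place of Lemma \ref{modelecompleteTrg<}; the function symbols $f_n$ never enter that argument. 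You instead bootstrap from quantifier elimination of $\mathcal T$ in $L_{\rho,f}$ --- a fact this paper only quotes from \cite{belair-point2014} --- and eliminate the extra symbols syntactically inside quantifier-free formulas, using Lemma \ref{consequencesT}(d) and a carry analysis for the $f_n$, and Lemma \ref{elimine_i/n} for the constants. This strategy is sound, and it even suffices to verify all the needed equivalences in $\mathbb D$ itself, since $T'$ axiomatizes its complete theory (Corollary \ref{completenessT}). What your route buys: an explicit, purely syntactic translation procedure, independent of the saturation and decomposition machinery of Section 4. What it costs: dependence on the $L_{\rho,f}$ quantifier elimination proved elsewhere, and nontrivial rewriting bookkeeping that you leave schematic. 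In particular, nested terms such as $f_2(f_3(x)+y)$ require either outermost-first unnesting (replacing comparisons involving $f_m(u)$ by comparisons involving $u=mf_m(u)$) or systematic use of the identities $f_m\circ f_n=f_{mn}$ and $f_n(x)+f_n(y)=f_n(x+y)+(\text{torsion correction})$, together with a termination measure, since your case splits duplicate subterms; and each case condition locating a term in a subinterval of length $\frac{1}{n}$ must itself be pushed down one level (by a single application of multiplication by $n$ on $[0,\frac{1}{n}[$) so as not to reintroduce the occurrence being eliminated. None of this is a fatal gap, but it is where a complete write-up would have to do real work.
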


As before, it follows from the following lemma. 
\begin{lemma} \label{lem-eqL}
Suppose $M,N$ models of $T'$ and $A$ a common substructure, $A\subseteq M, A\subseteq N$. Let  $\phi({\bf y}, x)$ be a quantifier- free $L$-formula and  ${\bf a}\in A$, and suppose there exists $b\in M$ such that $M\models \phi({\bf a}, b)$. Then there exists $c\in N$ such that $N\models \phi({\bf a}, c)$.
\end{lemma}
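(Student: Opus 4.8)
The plan is to descend from the quantifier elimination already available in the richer language $L_{\rho,f}$, where the theory $\mathcal T$ of $\mathbb D$ is universal and hence admits elimination of quantifiers, down to the bare language $L$, using Lemma \ref{elimine_i/n} to dispose of the extra symbols.

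First I would observe that each symbol of $L_{\rho,f}\setminus L$ is $L$-definable in every model of $T'$: the constants $\frac i n$ by the quantifier-free formulas $\theta_{i,n}$ of Lemma \ref{elimine_i/n}, and each $f_n$ by the uniform $L$-formula expressing that $f_n(x)$ is the least $z$ with $nz=x$, the minimality condition $\bigwedge_{0\le j<n} z\le z+\frac j n$ being rewritten via the $\theta_{j,n}$. Since these definitions are the very $L$-formulas that interpret the symbols in $\mathbb D$, the unique expansions $M^+,N^+$ of $M,N$ to $L_{\rho,f}$ are models of $\mathcal T$, and so they enjoy quantifier elimination.

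The heart of the argument is to promote the common $L$-substructure $A$ to a common $L_{\rho,f}$-substructure. I would let $A^+$ and $A^{++}$ be the $L_{\rho,f}$-substructures generated by $A$ inside $M^+$ and $N^+$, and show that the identity on $A$ extends to an $L_{\rho,f}$-isomorphism $A^+\cong A^{++}$. An element of $A^+$ is a value $t(\mathbf a)$ of an $L_{\rho,f}$-term, that is, an integer combination of $\mathbf a$, of the constants $\frac i n$, and of terms $f_n(s(\mathbf a))$; so it suffices to check that for all such $t_1,t_2$ and each atomic relation $R$, one has $M^+\models R(t_1(\mathbf a),t_2(\mathbf a))$ iff $N^+\models R(t_1(\mathbf a),t_2(\mathbf a))$. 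Multiplying by the least common multiple $K$ of the indices $n$ occurring in the $f_n$'s clears every occurrence of $f_n$, since $K f_n(s)=(K/n)s$, turning each $K t_k(\mathbf a)$ into an honest $L$-term in $\mathbf a$ plus a torsion constant. The comparison of $t_1$ and $t_2$ then reduces to finitely many equalities, order relations and $K$-torsion-coset memberships among integer combinations of $\mathbf a$ and torsion elements; by Lemma \ref{elimine_i/n} every one of these is equivalent to a quantifier-free $L$-formula in $\mathbf a$, and such formulas are preserved between $M$ and $N$ because $A$ embeds in both as an $L$-substructure. This yields the isomorphism, so $A^+$ sits as a common $L_{\rho,f}$-substructure of $M^+$ and $N^+$.

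Finally, the formula $\phi(\mathbf y,x)$ is quantifier-free already in $L\subseteq L_{\rho,f}$, and $M^+\models\exists x\,\phi(\mathbf a,x)$. By quantifier elimination in $L_{\rho,f}$ the formula $\exists x\,\phi(\mathbf y,x)$ is $\mathcal T$-equivalent to a quantifier-free $L_{\rho,f}$-formula $\psi(\mathbf y)$, so $M^+\models\psi(\mathbf a)$; since $\psi(\mathbf a)$ is quantifier-free with parameters in the common substructure $A^+$, it transfers to give $N^+\models\psi(\mathbf a)$, whence $N^+\models\exists x\,\phi(\mathbf a,x)$. As $\phi$ involves no symbol outside $L$, any witness $c$ works in $N$ itself, i.e. $N\models\phi(\mathbf a,c)$, as required. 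The main obstacle is the middle step: because the defining formula for $f_n$ is not quantifier-free in $L$, one cannot simply transfer atomic statements about $f_n$-terms directly. The $K$-clearing together with the explicit constant-elimination of Lemma \ref{elimine_i/n} is precisely what reduces everything to preserved quantifier-free $L$-data over $A$, and the careful bookkeeping of the finitely many torsion cosets produced by that clearing is the delicate point.
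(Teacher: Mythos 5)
Your overall architecture --- expand $M,N$ canonically to models of $\mathcal T$, show that the $L_{\rho,f}$-substructures generated by $A$ inside the two expansions are isomorphic over $A$, then quote quantifier elimination in $L_{\rho,f}$ --- is genuinely different from the paper's proof, which instead assumes $A,M,N$ are $\aleph_1$-saturated, uses pure-injectivity of $A$ to split $A=\tilde A\oplus A^{00}$, extends this to compatible decompositions $M=\tilde M\oplus M^{00}$ and $N=\tilde N\oplus N^{00}$, and then re-runs the computation of Theorem \ref{eqL} with Lemma \ref{eq-mrG} in place of Lemma \ref{modelecompleteTrg<}. Your final step is fine: granted the isomorphism $A^+\cong A^{++}$ over $A$, the transfer of the quantifier-free $L_{\rho,f}$-formula $\psi(\mathbf a)$ equivalent to $\exists x\,\phi(\mathbf a,x)$ works exactly as you say.

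The gap is the middle step, and it is not mere bookkeeping: it is the entire content of the lemma. Multiplying by $K$ does turn $Kt(\mathbf a)$ into an $L$-term plus a torsion constant, but multiplication by $K$ on these groups is neither injective nor order-preserving: its kernel is the cyclic group of $K$-torsion elements, and it wraps the circle around itself $K$ times. Hence from $Kt_1(\mathbf a)=Kt_2(\mathbf a)$ you may only conclude that $t_1(\mathbf a)-t_2(\mathbf a)$ is $K$-torsion, and from $Kt_1(\mathbf a)<Kt_2(\mathbf a)$ you may conclude nothing at all about the order of $t_1(\mathbf a)$ and $t_2(\mathbf a)$. To recover equality and order you must additionally know which torsion element $t_1(\mathbf a)-t_2(\mathbf a)$ is, equivalently in which interval $[\frac{i}{K},\frac{i+1}{K})$ each term lies --- and that this data is determined by the quantifier-free $L$-type of $\mathbf a$ is precisely what has to be proved. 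Your appeal to Lemma \ref{elimine_i/n} does not close the circle: applied to a term $t(\mathbf a)$ containing $f_n$'s, the formulas $\theta_{i,n}$ and $\varphi_{i,n}$ produce Boolean combinations of equalities and inequalities among integer multiples of $t(\mathbf a)$, i.e.\ again atomic relations between $f$-terms rather than quantifier-free $L$-conditions on $\mathbf a$; so the reduction is circular unless you run an induction on term complexity tracking cosets and sectors, which your sketch never carries out (this is in effect what the paper's decomposition argument, and the interval-subdivision argument inside Lemma \ref{elimine_i/n}, accomplish). Two smaller inaccuracies: the least common multiple does not clear nested occurrences of the $f_n$'s (e.g.\ $2f_2(f_2(x))=f_2(x)$; one needs a product along nesting paths), and universality of $\mathcal T$ alone does not yield quantifier elimination --- it is universality combined with model completeness, as the paper states.
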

\begin{proof} 
We can assume $A, M,N$ are $\aleph_1$-saturated. We will use a similar argument and notation as in theorem \ref{eqL}. Let $[\quad]_{1, M}, [\quad]_{1, N}, [\quad]_{2, M}, [\quad]_{2, N}$ be the projections. By lemma \ref{elimine_i/n}, we have $[a]_{1, M}= [a]_{1, N}$, and
 $a\in M^{00} \leftrightarrow a\in N^{00}$, for all $a\in A$. Then 
$A^{00}$ is a pure subgroup of $A$. Since $A$ is pure-injective (being $\aleph_1$-saturated), $A^{00}$ is a direct summand of $A$ (\cite{Z}). Let $\tilde A$ be a subgroup of $A$ such that $A=\tilde A \oplus A^{00}$, and $\tilde M$ be a subgroup of $M$ maximal such that $\tilde A\subseteq \tilde M$ and $\tilde M \cap M^{00}=\{0\}.$ Then $M=\tilde M \oplus M^{00}$. Indeed,  first note that $\tilde M$ is divisible, and contains all $\frac{i}{n}$  by maximality and since $M^{00}$ is torsion free.  Hence, for $x\in M$, if $x\not\in \tilde M$, then there exist a positive integer  $n$ and $m_1\in \tilde M$ such that $m_1+nx\in M^{00}$.  Let $b=m_1+nx$ and  $m\prime \in\tilde M, b\prime\in M^{00}$ be such that $nm\prime=m_1, nb\prime=b$.  We get $nx=n(b\prime-m\prime)$, so  $x=-m\prime+\frac{i}{n}+b\prime$, for some  $\frac{i}{n}$, and $-m\prime+\frac{i}{n}\in \tilde M, b\prime\in M^{00}$, as wanted. Similarly we get a subgroup  $\tilde N$ such that $\tilde A \subseteq \tilde N$ and $N=\tilde N\oplus N^{00}$.  It follows from these remarks that in the construction for theorem \ref{eq-L}, we can assume that the projections are such that for all   $a\in A$ we have $[a]_{1, M}= [a]_{1, N}$ and $[a]_{2, M}= [a]_{2, N}$. 
Now using lemma \ref{eq-mrG} instead of lemma \ref{modelecompleteTrg<}, the calculations in theorem \ref{eqL} apply directly (and simplify since we no longer have to worry about cuts) to check that if there is $b\in M$ such that $M\models \phi({\bf a}, b)$, then there is $c\in N$ such that $N\models \phi({\bf a}, c)$. \end{proof}

\par{\bf Acknowledgements:}
The first author thanks the Logic Group of the Department of Mathematics, University of California, Berkeley, and the second author thanks also the MSRI, for their hospitality during the spring of 2014. The first author was partially supported by  NSERC and the second author by FRS-FNRS.



\begin{thebibliography}{999999}
\bibitem{belair-point2014} L. B\'elair \unskip, and  F.~Point, La logique des parties fractionnaires de nombres r\'eels, C. R. Acad. Sci. Paris Ser.I {\bf 354} (2016), 645--648.

\bibitem{boigelot-rassart-wolper} B. Boigelot, S. Rassart and P. Wolper, ``On the expressiveness of real and integer arithmetic automata'', in {\em Proceeding ICALP '98, Proceedings of the 25th International Colloquium on Automata, Languages and Programming}, Lect. Notes in Comp. Sci., vol. 1443, Springer, 1998, pp. 152--163. 

\bibitem{BFL} F. Bouchy, A. Finkel and J. Leroux, Decomposition of decidable first-order logics over integers and reals, in {\em Temporal Representation and reasoning, Proceedings of the 15th  Symposium TIME 2008}, IEEE Computer Society Press, 2008, pp. 147--155.



\bibitem{C} A. Clifford, Totally ordered commutative semigroups, Bull. Amer. Math. Soc. {\bf 64} (1958), 305--316.

\bibitem{dolich-goodrick} A. Dolich and Goodrick, J., Strong theories of ordered abelian groups,  Fund. Math. {\bf 236} (2017), no. 3, 269--296. 

\bibitem{giraudet-leloup-lucas} M. Giraudet, G. Leloup and F. Lucas, First-order theory of cyclically ordered groups, arXiv:13-11.0499, 2013.

\bibitem{gunaydin} A. G\"unaydin, \textit{Model Theory of Fields with Multiplicative Groups}, Ph.D. thesis, University of Illinois at Urbana-Champaign, 2008.

\bibitem{GS} Y. Gurevich and P.H. Schmitt, The theory of ordered abelian groups does not have the independence property,  Trans. Amer. Math. Soc. {\bf 284} (1984) no. 1, 171--182. 

\bibitem{Hall} M. Hall, \textit{The theory of groups}, Macmillan, 1959.

\bibitem{IKT} S. Ibuka, H. Kikyo and H. Tanaka, Quantifier elimination for lexicographic products of ordered abelian groups, Tsukuba J. Math. {\bf 33} (2009), 95--129. 

\bibitem{KS} I. Kaplan and S. Shelah, Chain conditions in dependent groups, Annals of Pure and Applied Logic 164 (2013), 1322-1337.

\bibitem{leloup} G. Leloup, Autour des groupes cycliquement ordonn\'es, Ann. Fac. Sci. Toulouse -- Math. {\bf XXI} (2012), 235--257.

\bibitem{leloup-lucas} G. Leloup and F. Lucas, c-regular cyclically ordered groups, arXiv:1312.5269, 2013.


\bibitem{lucas_ddg} F. Lucas, ``Th\'eorie des mod\`eles des groupes cycliquement ordonn\'es divisibles'', in {\em S\'eminaire de structures alg\'ebriques ordonn\'es}, 
 Pr\'epublications de l'Universit\'e Paris VII, no 56, 1996.

\bibitem{lucas_hab} F. Lucas, \textit{Th\'eorie des mod\`eles de groupes ab\'eliens ordonn\'es}, th\`ese d'habilitation \`a diriger des recherches, Universit\'e Paris VII, 1996.

\bibitem{marker} D. Marker, \textit{Model Theory : An Introduction}, Springer, New York, 2002.

\bibitem{miller2001} C. Miller, Expansions of dense linear orders with the intermediate value proper, J. Symb. Logic {\bf 66} (2001), 1783--1790.

\bibitem{robinson-zakon} A. Robinson and E. Zakon, Elementary properties of ordered abelian groups, Trans. Amer. Math. Soc. {\bf 96} (1960), 222--236.

\bibitem{Sh} S. Shelah, Minimal bounded index subgroup for dependent theories, Proc. Amer. Math. Soc. {\bf 136} (3) (2008) 1087-1091.
(electronic).

\bibitem{S} P. Simon, \textit{A Guide to NIP Theories}, Lecture Notes in Logic, Cambridge University Press, 2015. 

\bibitem{W} V. Weispfenning, ``Mixed real-integer linear quantifier elimination'', in {\em Symbolic and Algebraic Computation, Proceedings  ISSAC'99}, Vancouver BC, ACM, 1999, pp. 129--136.


\bibitem{weispfenning1981}   V. Weispfenning, ``Elimination of quantifiers for certain ordered and lattice-ordered abelian groups'', in Proceedings of the Model Theory Meeting (Univ. Brussels, Brussels/Univ. Mons, Mons, 1980), {\em Bull. Soc. Math. Belg. S\'er. B} 33 (1981), no. 1, 131--155. 


\bibitem{Z} M. Ziegler, Model theory of modules, Ann.  Pure Appl. Logic  {\bf 26} (1984), 149--213.


\end{thebibliography}
\end{document}